\makeatletter\@addtoreset{equation}{section}\makeatother
\newcommand{\real}{{\mathds{R}}}
\newcommand{\1}{{\mathds{1}}}
\newcommand{\comp}{{\mathds{C}}}
\newcommand{\tr}{\mathop{\mathrm{tr}}}
\newcommand{\D}{\mathop{\mathrm{Dom}}}
\newcommand{\Hess}{\mathop{\mathrm{Hess}}}
\newcommand{\Div}{\mathop{\mathrm{div}}}
\newcommand{\id}{\mathop{\mathrm{Id}}}
\newcommand\liml{\lim\limits}
\newcommand\intl{\int\limits}
\newcommand\suml{\sum\limits}
\newcommand\prodl{\prod\limits}
\renewcommand{\le}{\leqslant}
\renewcommand{\ge}{\geqslant}
\newcommand\ffi{\varphi}
\newcommand\cR{\mathbb{R}}
\newcommand\pd{\partial}
\begin{document}

\newtheorem{theorem}{Theorem}[section]
\newtheorem{proposition}[theorem]{Proposition}
\newtheorem{definition}[theorem]{Definition}
\newtheorem{corollary}[theorem]{Corollary}
\newtheorem{lemma}[theorem]{Lemma}
\newtheorem{hypothesis}{Hypothesis}[section]
\theoremstyle{definition}
\newtheorem{remark}[theorem]{Remark}
\newtheorem{example}[theorem]{Example}
\newtheorem{conjecture}[theorem]{Conjecture}
\newtheorem{assumption}[theorem]{Assumtion}
\newtheorem*{ack}{Acknowledgement}

\frenchspacing

\title[Feynman formulae and phase space Feynman path integrals]{Feynman formulae  and phase space Feynman path integrals for tau-quantization of some L\'evy-Khintchine type  Hamilton functions}

\author{YANA A. BUTKO}

\address{Department of Fundamental Sciences, Bauman Moscow State Technical
University\\105005, 2nd Baumanskaya str., 5, Moscow, Russia\\
yanabutko@yandex.ru}

\author{MARTIN GROTHAUS}

\address{Fachbereich Mathematik, Technische Universit\"{a}t Kaiserslautern\\
Postfach 3049, 67653 Kaiserslautern, Germany\\
grothaus@mathematik.uni-kl.de}

\author{OLEG G. SMOLYANOV}

\address{Department of Mechanics and Mathematics,
Lomonosov Moscow State University\\119992, Vorob'evy gory, 1, Moscow, Russia\\
Smolyanov@yandex.ru}

\date{\today}

\maketitle


\begin{abstract}

This note is devoted to representation of some evolution semigroups. The semigroups are
generated by  pseudo-differential operators, which are obtained by different (parame-trized by a number $\tau$)  procedures of quantization  from a certain class of  functions (or symbols) defined on the phase space. This class contains functions which are second order polynomials with respect to the momentum variable and also some other functions. The considered semigroups are represented as limits of $n$-fold iterated integrals when $n$ tends to infinity (such representations are called Feynman formulae). Some of these representations are constructed  with the help of another pseudo-differential operators, obtained by the same procedure of quantization (such representations are called Hamiltonian Feynman formulae). Some representations are based on integral operators with elementary kernels (these ones are called Lagrangian Feynman formulae and are suitable for computations). A family of phase space Feynman pseudomeasures corresponding to different procedures of quantization  is introduced.  The considered evolution semigroups are represented also as phase space Feynman path integrals with respect to these Feynman pseudomeasures. The obtained Lagrangian Feynman formulae allow to calculate these phase space Feynman path integrals and to connect them with some functional integrals with respect to  probability measures.

\medskip\noindent
\textsc{Keywords} Feynman formulae;  Phase space Feynman path integrals, Hamiltonian Feynman path integrals, symplectic  Feynman path integrals, Feynman--Kac formulae, functional integrals;  Hamiltonian (symplectic) Feynman pseudomeasure, Chernoff theorem, pseudo-differential operators, approximations of semigroups, approximations of transition densities.

\medskip\noindent
\textsc{MSC 2010:} 47D07, 47D08, 35C99,  60J35, 60J51, 60J60.
\end{abstract}

\tableofcontents

\section{Introduction}

This paper is devoted to approximations of evolution semigroups $e^{-t\widehat{H}}$ generated by pseudo-differential operators $\widehat{H}$. The operators  $\widehat{H}$ are obtained from a given function $H(q,p)$  (which is called a symbol of $\widehat{H}$)  by some linear procedure (which is called a quantization). We consider a class of such procedures, parameterized by a number $\tau\in[0,1]$. This class includes $qp$-, $pq$- and Weyl quantizations. We obtain representations of the considered evolution semigroups by phase space Feynman path integrals which we define as limits of some usual integrals over finite dimensional  spaces  when the dimension of these spaces tends to infinity. Our approach is to approximate the semigroup   $e^{-t\widehat{H}}$ (for a given procedure of quantization) by a family of pseudo-differential operators  $\widehat{e^{-tH}}$ obtained by the same procedure of quantization  from the function $e^{-tH}$. Note, that if the function $H$ depends on both variables $q$   and $p$,  then  $e^{-t\widehat{H}}\neq \widehat{e^{-tH}}$. Nevertheless, under certain conditions one succeeds to prove  that
\begin{equation}\label{eq1}
e^{-t\widehat{H}}=\lim_{n\to\infty}\left[ \widehat{e^{-\frac{t}{n}H}}\right]^n.
\end{equation}
The limit in the right hand side is the limit of $n$-fold iterated integrals over the phase space when $n$ tends to infinity (such expressions are called Hamiltonian Feynman formulae). This limit
 can be interpreted as a phase space Feynman path integral with  $\exp\left\{-\intl_0^t H(q(s),p(s))ds \right\}$ in the integrand\footnote{In quantum mechanics  usually the integrands with $\exp\left\{i\intl_0^t H(q(s),p(s))ds \right\}$ are considered.}.

On a heuristic  level the same approach was   used already in Berezin's papers \cite{Ber71}, \cite{Ber80}  for investigation of Schr\"{o}dinger groups $e^{-it\widehat{H}}$. Berezin has assumed the identity
\begin{equation}\label{eq2}
e^{-it\widehat{H}}=\lim_{n\to\infty}\left[ \widehat{e^{-i\frac{t}{n}H}}\right]^n
\end{equation}
 and has interpreted the pre-limit expressions in the right hand side of the identity \eqref{eq2} as approximations to a phase space Feynman path integral. Moreover, Berezin has remarked that  Feynman path integral is ``very sensitive to the choice of approximations, and nonuniqueness appearing due to this dependence has the same character as nonuniqueness of quantization'' (see \cite{Ber80}). In other words,  Feynman path integral is different for different procedures of quantization. This difference may appear both in integrands and in the set of paths over which the integration takes place. Berezin has considered the case of Weyl quantization and his calculations have lead to a quit odd expression in the integrand of his  Feynman path integral. The question, how to distinguish the procedure of quantization on the language of Feynman path integrals, remained open.

The rigorous justification of the above mentioned approach for approximation of evolution (semi)groups was first obtained only in 2002 in the paper \cite{STT}. The main technical tool suggested in \cite{STT} was the Chernoff Theorem (see Theorem \ref{Chernoff} below, cf. \cite{Ch2}). It is a wide generalization of the classical Trotter's result used for  rigorous handling of Feynman path integrals over paths in configuration space of a system (see, e.g., \cite{Nelson}). In the paper \cite{STT} the identity \eqref{eq2} has been established for $\tau$-quantization of a class of functions $H(q,p)$ whose main ingredient is a function $h(q,p)$, which is  Fourier transform of a finite $\sigma$-additive measure. This ingredient allows to use Parseval equality to succeed the proof. A scheme to construct a phase space Feynman path integral  is also presented in \cite{STT}  (however, quit independently on the established Hamiltonian Feynman formulae \eqref{eq2}).

Later on, evolution semigroups $e^{-t\widehat{H}}$ have been treated by the same approach in papers \cite{BS}, \cite{BSchS-IDAQPRT},  \cite{BBSchS}.
In \cite{BS} the identity \eqref{eq1} has been established for the case of $qp$-quantization of  a function $H(q,p)$, which corresponds to a particle with  variable mass in a potential field. The semigroup $e^{-t\widehat{H}}$ has been considered on  the Banach space $C_\infty(\cR^d)$ of continuous, vanishing at infinity functions. The scheme of \cite{STT} was adopted (for the case of $qp$-quantization)  to interpret the obtained Hamiltonian Feynman formula \eqref{eq1} as a phase space Feynman path integral with respect to a Feynman type pseudomeasure. In \cite{BSchS-IDAQPRT} the identity \eqref{eq1} has been established for the semigroup $e^{-t\widehat{H}}$ on $C_\infty(\cR^d)$ in the case of $qp$-quantization of  a function $H(q,p)$, which is continuous  and negative definite   with respect to $p$,   continuous and bounded with respect to $q$. This class of functions $H$ contains, in particular, Hamilton functions of particles with variable mass in potential and magnetic fields and relativistic particles with variable mass. The semigroup $e^{-t\widehat{H}}$ (again on $C_\infty(\cR^d)$) generated by $\tau$-quantization of a function  $H(q,p)$, which is polynomial with respect to $p$ with variable, depending on $q$ coefficients, has been approximated in \cite{BBSchS} by a family of pseudo-differential operators with some $qp$-symbols. The obtained Hamiltonian Feynman formula has been interpreted as a phase space Feynman path integral with respect to the Feynman pseudomeasure defined in \cite{BS}.

This note continues the researches of \cite{STT}, \cite{BS}, \cite{BSchS-IDAQPRT},  \cite{BBSchS}. We consider Banach space $L_1(\cR^d)$  and  evolution semigroups $e^{-t\widehat{H}}$ generated by $\tau$-quantization of a function  $H(q,p)$, which is polynomial with respect to $p$ with variable, depending on $q$ coefficients. For all $\tau\in[0,1]$ we prove that the considered semigroups are being approximated as in \eqref{eq1}  by families of pseudo-differential operators  with $\tau$-symbols $e^{-tH}$. We construct\footnote{It is actually a modification of the scheme given in \cite{STT}.}  a family of Feynman pseudomeasures  $\Phi^\tau$, $\tau\in[0,1]$, and show that the limit in the right hand side of \eqref{eq1} for each $\tau\in[0,1]$ does coincide with a phase space Feynman path integral with respect to the corresponding pseudomeasure $\Phi^\tau$. For the case of $qp$-quantization we obtain the same result for a slightly more general class of functions $H$. We plan to obtain analogous  formulae for Schr\"{o}dinger groups $e^{-itH}$ by the method of analytic continuation in our subsequent work. The considered semigroups $e^{-t\widehat{H}}$ are represented for all $\tau\in[0,1]$ also by some limits of integral operators with (more or less) elementary kernels (such representations are called Lagrangian Feynman formulae). These representations are suitable for direct calculations. Moreover, the pre-limit expressions in the obtained Lagrangian Feynman formulae coincide with some functional integrals with respect to probability measures corresponding to stochastic processes associated to the generators $\widehat{H}$. These different representations of the same semigroups allow to calculate some phase space Feynman path integrals and to connect them with stochastic analysis.

\section{Notation and preliminaries}
\subsection{The Chernoff theorem and Feynman formulae}
Let  $(X,\|\cdot\|_X)$ be  a  Banach space,  $\mathcal{L}(X)$ be the
space of all continuous linear operators on $X$ equipped
with the strong operator topology,  $\|\cdot\|$
denote the operator norm on $\mathcal{L}(X)$ and $\id$ be the identity operator
in $X$. If $\D(L) \subset X$ is a linear subspace and $L: \D(L) \to X$
is a linear operator, then $\D(L)$ denotes the domain of $L$.  A one-parameter family $(T_t)_{t\ge0}$ of bounded linear operators
$T_t\,:X\to X$ is called a strongly continuous semigroup,  if  $T_0=\id$, $T_{s+t}=T_s\circ T_t$ for all $s,t\ge0$ and
$\lim_{t\to0}\|T_t\varphi-\varphi\|_X=0$ for all $\varphi\in X$. If $(T_t)_{t\ge0}$ is a strongly continuous semigroup on a Banach space $(X,\|\cdot\|_X)$, then the generator $L$ of $(T_t)_{t\ge0}$ is defined by
 $$
 L\varphi:=\liml_{t\to0}\frac{T_t\varphi-\varphi}{t}
 \quad
 \text{with domain}
 \,\,
  \D(L):=\bigg\{ \varphi\in X \,\bigg| \quad\liml_{t\to0}\frac{T_t\varphi-\varphi}{t}\quad\text{\rm exists as a strong limit} \bigg\}.
 $$
Consider an  evolution equation $\frac{\partial f}{\partial t}=Lf$. If $L$ is the generator of a strongly continuous semigroup  $(T_t)_{t\ge0}$ on a Banach space $(X,\|\cdot\|_X)$, then the (mild) solution of the Cauchy problem for this equation with the initial value $f(0)=f_0\in X$ is given by $f(t)=T_tf_0$ for all $f_0\in X$.
Therefore, solving the evolution equation $\frac{\partial f}{\partial t}=Lf$ means to construct a semigroup $(T_t)_{t\ge0}$ with the given generator $L$. If  the desired semigroup is not known explicitly it can be  approximated. One of the tools to approximate  semigroups is based on the Chernoff theorem \cite{Ch2}
(here we  present the version of Chernoff's theorem  given in  \cite{STT}).
\begin{theorem}[Chernoff]\label{Chernoff}
Let $X$ be a Banach space, $F:[0,\infty)\to{\mathcal{L}}(X)$ be a (strongly)
continuous mapping such that $F(0) =\id$  and $\|F(t)\|\le e^{at}$
for some  $ a\in [0, \infty)$ and all $t \ge 0$. Let
 $D$ be a linear subspace of  $\D(F'(0))$ such that the restriction of the operator
 $F'(0)$ to this subspace is closable. Let $(L, \D(L))$ be this closure. If
 $(L, \D(L))$ is the generator of a strongly continuous semigroup
 $(T_t)_{t \ge 0}$, then for any $t_0 >0$ the sequence
 $(F(t/n))^n)_{n \in {\mathds N}}$ converges to $(T_t)_{t \ge 0}$ as $n\to\infty$
 in the strong operator topology, uniformly with respect to $t\in[0,t_0],$ i.e.
 \begin{equation}\label{FF}
 T_t = \lim_{n\to\infty}\left[F(t/n)\right]^n.
 \end{equation}
\end{theorem}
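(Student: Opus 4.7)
The plan is to approximate $[F(t/n)]^n$ by the exponential $e^{A_n}$ of the bounded operator $A_n:=n(F(t/n)-\id)$, and then to pass from the exponentials to $T_t$ via a Trotter-Kato argument. After replacing $F(t)$ by $e^{-at}F(t)$ and $L$ by $L-a\id$, I may assume $\|F(t)\|\le 1$, so that $F$ is a contraction family. With $B_n:=F(t/n)$ and $A_n$ as above, the expansion $e^{sA_n}=e^{-sn}\sum_{k\ge 0}\frac{(sn)^k}{k!}B_n^k$ gives $\|e^{sA_n}\|\le 1$, while for $\varphi\in D$ the hypothesis $F'(0)\varphi=L\varphi$ yields $A_n\varphi=t\cdot\frac{F(t/n)-\id}{t/n}\varphi\to tL\varphi$.

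The first main step is the Chernoff $\sqrt n$-lemma: for any contraction $B$ on $X$ and any $\varphi\in X$,
\[
\|B^n\varphi-e^{n(B-\id)}\varphi\|_X\le\sqrt n\,\|(B-\id)\varphi\|_X.
\]
I would prove this by writing $B^n-e^{n(B-\id)}=e^{-n}\sum_k \frac{n^k}{k!}(B^n-B^k)$, using the telescoping estimate $\|(B^n-B^k)\varphi\|\le|n-k|\,\|(B-\id)\varphi\|$, valid for contractions because $B^k-B^n=\sum_{j=n}^{k-1}B^j(B-\id)$, and applying Cauchy-Schwarz to bound $\sum_k e^{-n}\frac{n^k}{k!}|n-k|\le\sqrt n$ via the variance of the Poisson$(n)$ law. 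Specialising to $B=B_n$ with $\varphi\in D$, the derivative estimate $\|(F(t/n)-\id)\varphi\|=(t/n)\|L\varphi\|+o(t/n)$ then yields $\|[F(t/n)]^n\varphi-e^{A_n}\varphi\|_X=O(t/\sqrt n)\to 0$, uniformly for $t\in[0,t_0]$.

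The second main step is the convergence $e^{A_n}\varphi\to T_t\varphi$, uniformly in $t$. Writing $A_n=t\,C_n(t)$ with $C_n(t):=(n/t)(F(t/n)-\id)$, the derivative hypothesis gives $C_n(t)\varphi\to L\varphi$ as $n\to\infty$ for $\varphi\in D$, uniformly in $t\in(0,t_0]$, since only the step size $t/n\to 0$ enters the definition. Combined with the uniform contraction bound $\|e^{sC_n(t)}\|\le 1$, a Trotter-Kato-type approximation theorem (strong convergence on a core of a generator plus stability implies strong convergence of the generated semigroups, uniformly on compact parameter intervals), applied jointly in $(s,t)$, produces $e^{tC_n(t)}\varphi=e^{A_n}\varphi\to e^{tL}\varphi=T_t\varphi$, uniformly in $t$. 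Combining the two steps via the triangle inequality gives the convergence on $D$, and the uniform bound $\|[F(t/n)]^n\|\le 1$ together with density of $D$ in $X$ and a standard $3\varepsilon$-argument extends strong convergence to all $\varphi\in X$, uniformly in $t$.

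The main technical obstacle I anticipate is the combination of the quantitative $\sqrt n$-lemma with the uniformity in $t$ in the Trotter-Kato step: the Chernoff estimate is the only genuinely rate-sensitive ingredient, while the $t$-dependence of $A_n$ forces one to use a joint version of the approximation theorem rather than a single-sequence one, or equivalently to show that $C_n(t)\to L$ on $D$ uniformly in $t$. The remaining ingredients are standard once one has the closability of $F'(0)|_D$ to the generator $L$ and the density of the core $D$ in $X$, both of which are built into the hypotheses of the theorem.
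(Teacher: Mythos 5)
The paper does not prove this theorem: it is quoted as a known result, with references to Chernoff's original memoir \cite{Ch2} and to the version stated in \cite{STT}, so there is no in-paper argument to compare yours against. On its own merits, your proposal is the standard and correct proof: reduction to the contraction case by the rescaling $F(t)\mapsto e^{-at}F(t)$, the Chernoff $\sqrt{n}$-lemma $\|B^n\varphi-e^{n(B-\id)}\varphi\|_X\le\sqrt{n}\,\|(B-\id)\varphi\|_X$ proved via the telescoping identity and the Poisson variance bound, and then the Trotter--Kato approximation theorem to pass from $e^{n(F(t/n)-\id)}$ to $T_t$. The one point that genuinely needs care is the one you already flag: the generator $C_n(t)=(n/t)(F(t/n)-\id)$ depends on $t$, so uniformity in $t\in[0,t_0]$ must be threaded through the Trotter--Kato step. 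Your observation that $C_n(t)\varphi\to L\varphi$ uniformly in $t$ (because only the step size $t/n\le t_0/n$ enters) does the job, since it yields resolvent convergence $(\lambda-C_n(t))^{-1}\to(\lambda-L)^{-1}$ uniformly in $t$ on the dense set $(\lambda-L)D$ (here the hypothesis that $D$ is a core for $L$ is used), and the quantitative form of Trotter--Kato then gives semigroup convergence uniformly in both the time variable and the parameter $t$. So the argument is complete in outline; I see no gap.
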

Here the derivative at the origin of a function $F:[0,\varepsilon) \to
L(X)$, $\varepsilon > 0$, is a linear mapping $F'(0): \D(F'(0)) \to
X$ such that
\begin{eqnarray*}
F'(0)g := \lim_{t \to 0}\frac{F(t)g -F(0)g}{t},
\end{eqnarray*}
where $\D(F'(0))$ is the vector space of all elements $g \in X$ for
which the above limit exists.

A family of operators $(F(t))_{t \ge 0}$ suitable for the formula \eqref{FF} is called \emph{Chernoff
equivalent} to the semigroup $(T_t)_{t \ge 0}$, i.e.  this family
satisfies all the assertions of the Chernoff  theorem with respect to this semigroup.
In many cases the operators $F(t)$ are integral operators and, hence, we have a limit of iterated integrals on the right hand side of the equality \eqref{FF}. In this setting it is called  \emph{Feynman formula}.
\begin{definition}
A Feynman formula is a representation of a solution of an initial
(or initial-boundary) value problem for an evolution equation (or,
equivalently, a representation of the semigroup solving the
problem) by a limit of $n$-fold iterated integrals  as $n\to\infty$.
\end{definition}

We use this notation since it was Feynman (\cite{Feyn1}, \cite{Feyn2}) who introduced  a functional (path) integral as a limit of  iterated finite dimensional integrals. The limits in  Feynman formulae coincide with (or in some cases define) certain functional integrals with respect to probability measures or Feynman type pseudomeasures on a set  of paths of a physical system.  A  representation of a solution of an initial
(or initial-boundary) value problem for an evolution equation (or,
equivalently, a representation of the semigroup resolving the
problem) by a functional integral is usually called \emph{Feynman--Kac formula}.  Hence, the
iterated integrals in a Feynman formula for some problem give
approximations to a functional integral in the Feynman-Kac formula
representing the solution of the same problem. These approximations in many cases contain only elementary functions as integrands and, therefore,
can be used for direct calculations and simulations.

The notion of  a Feynman formula has been introduced in
\cite{STT} and the method to obtain Feynman formulae with the help of the Chernoff theorem has been developed in a series of papers
\cite{STT}--\cite{SWW5}.  Recently, this method has been
successfully applied to obtain Feynman formulae for different
classes of problems for evolution equations on different
geometric structures, see, e.g. \cite{BShS},  \cite{MZ2}--\cite{BSchS-IDAQPRT}, \cite{GS},
\cite{Obr1}, \cite{OST}, \cite{Plyash1}, \cite{Plyash2}, \cite{SakS}, \cite{SSham}, \cite{SSham10}.

We call the identity \eqref{FF}
  a \emph{Lagrangian Feynman formula}, if the $F(t)$, $t>0$,
are integral operators with elementary kernels; if the $F(t)$ are
pseudo-differential operators (the definition is given in Section \ref{PDO}), we speak of \emph{Hamiltonian Feynman formulae}.
This terminology is inspired by the fact that a Lagrangian Feynman
formula gives approximations to a functional integral over a set of
paths in the configuration space of a system (whose evolution is
described by the semigroup $(T_t)_{t \ge 0}$), while a Hamiltonian
Feynman formula corresponds to a functional integral over a set of
paths in the phase space of  some system.

\subsection{Pseudo-differential operators, their symbols and $\tau$-quantization}\label{PDO}

Let us consider a measurable function  $H\,:\,\real^d\times\real^d\to\mathds{C}$  and $\tau\in[0,1]$. We define a pseudo-differential operator ($\Psi$DO)
$\widehat{H}_\tau(\cdot,D)$ with  $\tau$-symbol $H(q,p)$  on a Banach space $(X,\|\cdot\|_X)$ of some functions on $\real^d$ by
\begin{equation}\label{def-PDO}
\widehat{H}_\tau(q,D)\varphi(q)=(2\pi)^{-d}\intl_{\real^d}\intl_{\real^d}e^{ip\cdot(q-q_1)}H(\tau q+(1-\tau)q_1,p)\varphi(q_1)\,dq_1\,dp
\end{equation}
where the domain $\D(\widehat{H}_\tau(\cdot,D))$ is the set of all $\varphi\in X$ such  that the right hand side of the  formula \eqref{def-PDO} is  well defined as an element of $(X,\|\cdot\|_X)$.  We always assume that the set of test functions $C^\infty_c(\real^d)$ belongs to the domain of the operator $\widehat{H}_\tau(\cdot,D)$.

The mapping  $H\mapsto \widehat{H}_\tau(\cdot,D)$ from a space of functions on $\real^d\times\real^d$ into the space of linear operators on $(X,\|\cdot\|_X)$  is called the $\tau$-quantization, the operator  $\widehat{H}_\tau(\cdot,D)$ is called the $\tau$-quantization of the function $H$. Note that if  the symbol $H$  is a sum of functions  depending only on one of the variables $q$ or $p$   then the $\Psi$DOs  $\widehat{H}_\tau(\cdot,D)$ coincide for all  $\tau\in[0,1]$. If $H(q,p)=q p=p q$, $q,p\in\real^1$ then $\widehat{H}_\tau(q,D)\varphi(q)=-i\tau q\frac{\partial}{\partial q}\varphi(q)-i(1-\tau)\frac{\partial}{\partial q}(q\varphi(q))$. Therefore, different $\tau$ correspond to different orderings of non-commuting operators such that  we  have  the ``qp''-quantization for $\tau=1$,    the ``pq''-quantization for $\tau=0$ and the Weyl quantization for $\tau=1/2$. A function $H(q,p)$ is usually considered  as a Hamilton function of a classical system. Then the operator $\widehat{H}_\tau(\cdot,D)$ is called the Hamiltonian of a quantum system obtained by $\tau$-quantization of the classical system with the Hamilton function $H$.

\begin{remark}
We define the Fourier transform by the formula $\mathcal{F}[\ffi](p)=(2\pi)^{-d/2}\int_{\cR^d}e^{-ip\cdot q}\ffi(q)dq$ and the inverse  Fourier transform by the formula $\mathcal{F}^{-1}[\ffi](p)=(2\pi)^{-d/2}\int_{\cR^d}e^{ip\cdot q}\ffi(q)dq$. If $\tau=1$, then formula \eqref{def-PDO} reads as
$
\widehat{H}_\tau(q,D)\varphi(q)=(2\pi)^{-d/2}\int_{\real^d}e^{ip\cdot(q)}H(q,p)\mathcal{F}[\ffi](p)\,dp.
$
In the case $H$ doesn't depend on $q$ we have also
$
\widehat{H}_\tau(q,D)\varphi(q)=\mathcal{F}^{-1}\big[H\mathcal{F}[\ffi]\big](q).
$
It is a matter of taste to chose which of the mappings $\mathcal{F}$, $\mathcal{F}^{-1}$ to consider as Fourier transform and which as its inverse.
So, the $\psi$DO with $\tau$-symbol $H(q,p)$ could be defined also like this:
\begin{equation*}
\widehat{H}_\tau(q,-D)\varphi(q)=(2\pi)^{-d}\intl_{\real^d}\intl_{\real^d}e^{-ip\cdot(q-q_1)}H(\tau q+(1-\tau)q_1,p)\varphi(q_1)\,dq_1\,dp.
\end{equation*}
It is easy to see that the operator $\widehat{H}_\tau(\cdot,-D)$ is actually a $\psi$DO  $\widehat{\lambda}_\tau(\cdot,D)$ defined by the formula \eqref{def-PDO}
with the help of the symbol $\lambda(q,p):=H(q,-p)$. If $H(q,p)$ is even with respect to $p$ then both definitions give the same operator.
\end{remark}

In the sequel we will use the following result (cf.~\cite{STT}[Lemma4]).
\begin{lemma}\label{lemma:order of PDO}
Let $\tau=1$. Let $f,g:\cR^d\to \mathbb{C}$   be  bounded continuous  functions and $\lambda:\cR^d\times\cR^d\to\mathbb{C}$ be 1-symbol of $\Psi$DO $\widehat{\lambda}_1(\cdot,D)$. Let  $H(q,p)=f(q)g(p)\lambda(q,p)$. Then
$$
\widehat{H}_1(\cdot,D)\ffi=\left(\widehat{f}\circ\widehat{\lambda}_1(\cdot,D)\circ\widehat{g}\right)\ffi
$$
for all $\ffi\in S(\cR^d)\cap \D(\widehat{H}_1(\cdot,D))\cap\D(\widehat{f}\circ\widehat{\lambda}_1(\cdot,D)\circ\widehat{g})$.
\end{lemma}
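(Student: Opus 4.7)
The plan is to unfold both sides of the asserted identity via the definition \eqref{def-PDO} with $\tau=1$ and exhibit them as the same integral. The central observation is that for $\tau=1$, the 1-quantization of a symbol that depends only on $q$ is multiplication by that function, while the 1-quantization of a symbol that depends only on $p$ is a Fourier multiplier. Indeed, as already noted in the preceding remark, when $\tau=1$ one has the equivalent representation
\begin{equation*}
\widehat{H}_1(q,D)\varphi(q)=(2\pi)^{-d/2}\intl_{\real^d}e^{ip\cdot q}H(q,p)\mathcal{F}[\varphi](p)\,dp,
\end{equation*}
so in particular $\widehat{f}\varphi=f\varphi$ and $\widehat{g}\varphi=\mathcal{F}^{-1}[g\mathcal{F}[\varphi]]$, i.e.\ $\mathcal{F}[\widehat{g}\varphi](p)=g(p)\mathcal{F}[\varphi](p)$.

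With this in hand, the computation is essentially a short calculation. I would first write, for $\varphi\in S(\real^d)$ in the intersection of the stated domains,
\begin{equation*}
\widehat{H}_1(q,D)\varphi(q)=f(q)\cdot(2\pi)^{-d}\intl_{\real^d}\intl_{\real^d}e^{ip\cdot(q-q_1)}\lambda(q,p)g(p)\varphi(q_1)\,dq_1\,dp,
\end{equation*}
pulling $f(q)$ out of the $(q_1,p)$-integral (it does not depend on those variables). Then I would perform the $q_1$-integration first to recover $(2\pi)^{d/2}\mathcal{F}[\varphi](p)$, obtaining
\begin{equation*}
\widehat{H}_1(q,D)\varphi(q)=f(q)\cdot(2\pi)^{-d/2}\intl_{\real^d}e^{ip\cdot q}\lambda(q,p)\,g(p)\mathcal{F}[\varphi](p)\,dp.
\end{equation*}

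On the other side, since $\mathcal{F}[\widehat{g}\varphi](p)=g(p)\mathcal{F}[\varphi](p)$, the alternative representation of $\widehat{\lambda}_1(\cdot,D)$ gives
\begin{equation*}
\bigl(\widehat{\lambda}_1(\cdot,D)\circ\widehat{g}\bigr)\varphi(q)=(2\pi)^{-d/2}\intl_{\real^d}e^{ip\cdot q}\lambda(q,p)g(p)\mathcal{F}[\varphi](p)\,dp,
\end{equation*}
and multiplying by $f(q)$ yields precisely the same expression. Thus the two sides agree on $\varphi$. The main (and essentially only) technical point is justifying the use of Fubini in the step that collapses the $q_1$-integral into $\mathcal{F}[\varphi](p)$; this is why the lemma restricts to $\varphi\in S(\real^d)$ and to the intersection of the relevant domains, where boundedness and continuity of $f,g$ together with rapid decay of $\varphi$ (and consequently of $\mathcal{F}[\varphi]$) ensure absolute integrability in $(q_1,p)$ for each fixed $q$. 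Once Fubini is legitimate, the identification is a one-line comparison of integrands.
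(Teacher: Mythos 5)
Your proposal is correct and follows essentially the same route as the paper's own proof: both unfold the two sides via the $\tau=1$ representation, identify $\widehat{f}$ as multiplication and $\widehat{g}$ as the Fourier multiplier $\mathcal{F}^{-1}[g\mathcal{F}[\cdot]]$, collapse the $q_1$-integral into $\mathcal{F}[\ffi](p)$, and compare the resulting integrands. Your additional remark about justifying Fubini on $S(\cR^d)$ is a reasonable elaboration of a step the paper leaves implicit.
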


\begin{proof} Let  $\ffi\in S(\cR^d)\cap \D(\widehat{H}_1(\cdot,D))\cap\D(\widehat{f}\circ\widehat{\lambda}_1(\cdot,D)\circ\widehat{g})$. Let  $\mathcal{F}$ and $\mathcal{F}^{-1}$ stand for  Fourier transform and its inverse respectively. Then
\begin{align*}
\widehat{H}_1(q,D)\ffi(q)&=(2\pi)^{-d}\intl_{\real^d}\intl_{\real^d}e^{ip\cdot(q-q_1)}H(q,p)\varphi(q_1)\,dq_1\,dp=\\
&
=(2\pi)^{-d/2}\intl_{\real^d}e^{ip\cdot q}f(q)g(p)\lambda(q,p)\mathcal{F}[\ffi](p)dp.
\end{align*}

\begin{align*}
\left(\widehat{f}\circ\widehat{\lambda}_1(\cdot,D)\circ\widehat{g}\right)\ffi(q)&
=\left(\widehat{f}\circ\widehat{\lambda}_1(\cdot,D)\right)\mathcal{F}^{-1}[g\mathcal{F}[\ffi]](q)=\\
&
=f(q)(2\pi)^{-d/2}\intl_{\real^d}e^{ip\cdot q}\lambda(q,p)\mathcal{F}[\mathcal{F}^{-1}[g\mathcal{F}[\ffi]]](p)dp=\\
&
=(2\pi)^{-d/2}\intl_{\real^d}e^{ip\cdot q}f(q)g(p)\lambda(q,p)\mathcal{F}[\ffi](p)dp.
\end{align*}

\end{proof}

\section{ Feynman formulae for tau-quantization of some L\'evy-Khintchine type  Hamilton functions.}
    In this section we will consider $(X,\|\cdot\|_X):=(L_1(\cR^d),\|\cdot\|_1)$ and $\tau\in[0,1]$.
Let us introduce two Hamilton functions $h(q,p)$  and $H(q,p)$.
Let the function   $h(\cdot,\cdot): \real^d\times\real^d\to\comp$  be given by a formula
\begin{equation}\label{def:h}
    h(q,p)
    = c(q) +ib(q)\cdot p + p\cdot A(q)p,
\end{equation}
where for each $q\in\cR^d$ we have  $b(q)\in\real^d$, $c(q)\in\real$, $A(q)$ is a   symmetric matrix.   Let us consider also a  function  $r(\cdot): \real^d\to\comp$ given by a formula
\begin{equation}\label{def:r}
    r(p)    = \intl_{\cR^d\setminus\{ 0\}}
        \left(1-e^{iy\cdot p} + \frac{iy\cdot p}{1+|y|^2} \right)\,N(dy),
\end{equation}
where  $N(\cdot)$ is a  Radon measure on $\real^d\setminus\{0\}$ with
$\intl_{\real^d}\frac{|y|^2}{1+|y|^2}N(dy)<\infty$. Note that we consider the case when  $N$ does not depend on $q$.
Let
\begin{equation}\label{def:H}
H(q,p)=h(q,p)+r(p)\equiv c(q) +ib(q)\cdot p + p\cdot A(q)p+\intl_{\cR^d\setminus\{ 0\}}
        \left(1-e^{iy\cdot p} + \frac{iy\cdot p}{1+|y|^2} \right)\,N(dy).
\end{equation}
 If $c(q)\ge0 $ then the Hamilton function $H$ is continuous negative definite with respect to the variable $p$ and the formula \eqref{def:H}
 is just the  L\'evy-Khintchine  formula.  We don't assume in the sequel  that $c(q)\ge0 $, that's why  we call our symbol $H$ a \emph{L\'evy-Khintchine type function}.

To handle the proofs we need to assume (sometimes different) boundness and smoothness conditions on  the symbol $H$. All the assumptions, we will use in the sequel, are collected below.
\begin{assumption}\label{assum_smoothnessOfCoef} 
 (i)   There exist constants $0<a_0\le A_0<+\infty$  such that for all $p\in\cR^d$ and all $q\in\cR^d$ the following  inequalities hold
$$
a_0 |p|^2\le p\cdot A(q)p\le A_0 |p|^2.
$$

\noindent (ii) The coefficients  $A(\cdot)$, $b(\cdot)$, $c(\cdot)$  with all their derivatives up to the 4th order are continuous and bounded.

\noindent(iii) The coefficients  $A(\cdot)$, $b(\cdot)$, $c(\cdot)$ are infinite differentiable and bounded with all their derivatives.

      \noindent (iv) The symbol $H(q,p)$ is of class  $C^\infty(\cR^d)$ with respect to the variable $p$ for each $q\in \cR^d$.
\end{assumption}

\bigskip

Consider  an operator $\widehat{H}_\tau(\cdot,D)$ with the $\tau$-symbol $H(q,p)$ for $\tau\in[0,1]$ in $X$,  i.e. for any   function $\ffi\in\D(\widehat{H}_\tau(\cdot,D))\subset X$
\begin{equation}\label{def-PDO-H}
\widehat{H}_\tau(q,D)\varphi(q)=(2\pi)^{-d}\intl_{\real^d}\intl_{\real^d}e^{ip\cdot(q-q_1)}H(\tau q+(1-\tau)q_1,p)\varphi(q_1)\,dq_1\,dp.
\end{equation}

\begin{remark}\label{rem:tau connection}
Note that (due to \cite{BBSchS} and  \cite{BSchS-IDAQPRT}) the operator $\widehat{H}_\tau(\cdot,D)$ with  $\tau$-symbol  $ H(q,p)$ given by the formula \eqref{def:H} for $A(\cdot)\in C^2(\real^d)$, $b(\cdot)\in C^1(\real^d)$, $c(\cdot)\in C(\real^d)$  and  each $\tau\in[0,1]$   can be extended  to the set $C^2_b(\real^d)$  by
\begin{align}\label{formula:tau-to-1}
\widehat{H}_\tau(q,D)\varphi(q)=&
-\tr (A(q)\Hess \varphi(q))+[b(q)-2(1-\tau)\Div A(q)]\cdot\nabla\varphi(q)+\\
&+
[c(q)+(1-\tau)\Div b(q)-(1-\tau)^2\tr(\Hess A(q))]\varphi(q)+ \nonumber\\
&+\int_{y\neq 0} \left( \ffi(q+y) - \ffi(q)
    - \frac{y\cdot\nabla \ffi(q)}{1+|y|^2}\right)\,N(dy),\nonumber
\end{align}
i.e. $\widehat{H}_\tau(\cdot,D)$ is a  sum of a second order differential operator with continuous coefficients  and an integro-differential operator generating a L\'evy process.
Hence, in the case $X=C_\infty(\cR^d)$ the set $C^\infty_c(\cR^d)$ belongs to its domain (cf. \cite{Sato}[Theo.31.5]). In our  case $X=L_1(\cR^d)$ we  assume that $C^\infty_c(\cR^d)\subset\D(\widehat{H}_\tau(\cdot,D))$ and, moreover, $C^\infty_c(\cR^d)$  is a core.
\end{remark}

\begin{assumption}\label{assum_generate}
We assume that  the
coefficients $A(\cdot)$, $b(\cdot)$, $c(\cdot)$, $N$ are such that the closure $(L^\tau,\D(L^\tau))$ of a $\psi$DO $(\widehat{H}_\tau(\cdot,D),C^\infty_c(\cR^d))$ with the $\tau$-symbol $H(q,p)$ as in \eqref{def:H}  generates a strongly continuous semigroup $(T^\tau_t)_{t\ge0}$ on the space $X$ (see, e.g., Th.4.5.3, Th.4.6.25 in~\cite{NJ} for the cases when the assumption is fulfilled).
\end{assumption}

Consider a family   $(F_\tau(t))_{t\ge0}$  of $\psi$DOs with the  $\tau$-symbol
$e^{-tH(q,p)}$ in the space $X$,  i.e. for any   $\ffi\in\D(F_\tau(t))$
\begin{equation}\label{F0}
F_\tau(t)\ffi(q)=(2\pi)^{-d}\intl_{\cR^d}\intl_{\cR^d}e^{i
p\cdot (q-q_1)} e^{-{tH(\tau q+(1-\tau)q_1,p)}}\ffi(q_1)dq_1dp.
\end{equation}

\begin{lemma}\label{Lemma_tau-norm estim}
  Under Assumption \ref{assum_smoothnessOfCoef} (i), (ii) for any  $\ffi\in C^\infty_c(\cR^d)$ and any $\tau\in[0,1]$  we have    $F_\tau(t)\ffi\in X$. For all   $t\ge0$ the operators   $F_\tau(t)$ can be extended to   bounded mappings on the space $X$   and there exists a constant $k\ge0$ such that for all $t\ge0$  holds the estimate:
\begin{equation}\label{estimate:F(t)}
\|F_\tau(t)\|\le e^{tk}.
\end{equation}
\end{lemma}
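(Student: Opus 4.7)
The plan is to compute the integral kernel $K_\tau(t,q,q_1)$ of $F_\tau(t)$ explicitly and then apply the standard $L_1$-estimate
$$\|F_\tau(t)\ffi\|_1 \le \|\ffi\|_1 \cdot \sup_{q_1}\intl_{\cR^d}|K_\tau(t,q,q_1)|\,dq.$$
For the kernel, I would factorize the symbol as $e^{-tH(\tilde q, p)} = e^{-tc(\tilde q)}\,e^{-itb(\tilde q)\cdot p - tp\cdot A(\tilde q) p}\,e^{-tr(p)}$ (with $\tilde q := \tau q + (1-\tau)q_1$), observing that, by the L\'evy--Khintchine formula, $e^{-tr(p)}$ is the characteristic function of the convolution semigroup of probability measures $(\mu_t)_{t\ge0}$ associated with the L\'evy process with jump measure $N$. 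Inverting the Fourier transform in $p$ of the remaining two factors produces the convolution of the Gaussian density $G_t^{\tilde q}(u) := (4\pi t)^{-d/2}(\det A(\tilde q))^{-1/2}\exp(-u\cdot A(\tilde q)^{-1}u/(4t))$ (with covariance $2tA(\tilde q)$) and $\mu_t$, evaluated at an affine shift; explicitly,
$$K_\tau(t,q,q_1) = e^{-tc(\tilde q)}\intl_{\cR^d} G_t^{\tilde q}\bigl(q - q_1 - tb(\tilde q) + y\bigr)\,\mu_t(dy).$$

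Next I would extract a uniform pointwise majorant. Assumption~\ref{assum_smoothnessOfCoef}(i) yields $G_t^{\tilde q}(u) \le \bar G_t(u) := (4\pi t a_0)^{-d/2}\exp(-|u|^2/(4t A_0))$ independently of $\tilde q$; part (ii) gives $|e^{-tc(\tilde q)}| \le e^{t\|c\|_\infty}$ and $B := \|b\|_\infty < \infty$. Since $\mu_t$ is a probability measure, Fubini reduces the task to showing that
$$\intl_{\cR^d} \bar G_t\bigl(q - q_1 - tb(\tilde q(q)) + y\bigr)\,dq$$
is uniformly bounded over $q_1, y \in \cR^d$.

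The main technical obstacle is that the natural substitution $v = q - q_1 - tb(\tilde q(q)) + y$ has Jacobian $I - t\tau\, b'(\tilde q)$, which ceases to be globally invertible once $t\tau\|\nabla b\|_\infty \ge 1$, so a direct change of variables works only for small $t$. I would circumvent this by a layer-cake (sublevel-set) argument that uses only the $C^0$-boundedness of $b$: for every $R > 0$ one has the inclusion
$$\{q\in\cR^d : |q - q_1 - tb(\tilde q(q)) + y| < R\} \;\subset\; \{q\in\cR^d : |q - q_1 + y| < R + tB\},$$
so the sublevel sets have Lebesgue measure at most $C_d(R + tB)^d$. Plugging this into $\intl\bar G_t(\cdot)\,dq = \intl_0^{M_t}|\{\bar G_t > \lambda\}|\,d\lambda$ with $M_t := \bar G_t(0)$ and $R(\lambda) := \sqrt{4t A_0 \log(M_t/\lambda)}$, and computing the $\lambda$-integral via $u := \log(M_t/\lambda)$ (gamma-type integral), yields a polynomial bound of the form $C_1 + C_2 t^{d/2}$, uniform in $q_1, y$. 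Integrating against $\mu_t$ and multiplying by $e^{t\|c\|_\infty}$ gives $\|F_\tau(t)\|_{L_1 \to L_1} \le e^{t\|c\|_\infty}(C_1 + C_2 t^{d/2})$, which is dominated by $e^{tk}$ for any $k > \|c\|_\infty$ and so establishes the claimed estimate \eqref{estimate:F(t)}. Finally, that $F_\tau(t)\ffi \in X$ for $\ffi \in C^\infty_c(\cR^d)$ follows from this bound together with Fubini applied to $\intl_{\cR^d}\intl_{\cR^d}|K_\tau(t,q,q_1)\ffi(q_1)|\,dq_1\,dq$.
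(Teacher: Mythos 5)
Your reduction to the Schur-type bound $\sup_{q_1}\int_{\cR^d}|K_\tau(t,q,q_1)|\,dq$ and your computation of the kernel as a Gaussian convolved with the probability measure $\mu_t$ are correct and parallel the paper's representation \eqref{form:Lagrange-family}. The genuine gap is in the final constant. By majorizing $G_t^{\tilde q}$ by $\bar G_t(u)=(4\pi t a_0)^{-d/2}\exp(-|u|^2/(4tA_0))$ you destroy the normalization of the Gaussian: already for $b\equiv 0$, $c\equiv 0$ your layer-cake computation can give nothing better than $\int_{\cR^d}\bar G_t(u)\,du=(A_0/a_0)^{d/2}$, so your $C_1\ge (A_0/a_0)^{d/2}>1$ whenever $a_0<A_0$. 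Consequently your final bound is of the form $C_1e^{kt}$ with $C_1>1$, and the closing claim that $e^{t\|c\|_\infty}(C_1+C_2t^{d/2})$ ``is dominated by $e^{tk}$'' is false for small $t$: at $t=0$ the left side equals $C_1>1=e^{0}$. This is not a cosmetic loss, because Chernoff's theorem as used here (Theorem \ref{Chernoff}) requires exactly $\|F(t)\|\le e^{at}$, with no multiplicative constant; a bound $Me^{at}$ with $M>1$ does not yield $\|F(t/n)^n\|\le Me^{at}$ and the argument collapses. Moreover, the defect is structural rather than a matter of sharpening constants: any estimate based only on pointwise majorization of the kernel using the $C^0$-bounds $a_0\le A\le A_0$, $\|b\|_\infty$, $\|c\|_\infty$ cannot see the cancellation that makes $\int_{\cR^d}K_\tau(t,q,q_1)\,dq=1+O(t)$; even a first-order Taylor expansion of the coefficients only gives $1+O(\sqrt t)$, still incompatible with $e^{kt}$. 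Recovering the constant $1+O(t)$ genuinely requires the higher-order smoothness of $A$, $b$, $c$ demanded in Assumption \ref{assum_smoothnessOfCoef}(ii), which your argument never invokes.

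The paper circumvents exactly this difficulty differently: it treats $\tau=0$ separately (there $\tilde q=q_1$ does not depend on $q$, the $q$-integral of the kernel is exactly a probability, and one gets the clean constant $e^{-t\min c}$), and for $\tau\in(0,1]$ it performs the linear change of variables $y=q+\frac{1-\tau}{\tau}q_1$, $x=q_1/\tau$, which turns $\tau q+(1-\tau)q_1$ into $\tau y$ and thereby identifies $F_\tau(t)$ (up to convolution with the probability measure $\mu_t$, which is harmless in $L_1$) with the purely Gaussian operator $G^1_{A_\tau,b_\tau,c_\tau}(t)$ with rescaled coefficients. The needed sharp bound $\|G^1_{A,b,c}(t)\|\le e^{kt}$ is then imported from \cite{Plyash1}, where the smoothness of the coefficients is used. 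If you want to keep your direct kernel approach, you would have to carry out that parametrix-type expansion yourself rather than bounding the Gaussian crudely from above.
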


\begin{proof} Using the inequalities of Assumption \ref{assum_smoothnessOfCoef} (i)  and the fact, that  the real part of each  continuous negative definite function is nonnegative (see inequalities (3.123) and (3.117) in \cite{NJ}),
we obtain the estimate
\begin{equation}\label{estimate1}
\sup\limits_{q\in \cR^d}|e^{-tH(q,p)}|\le e^{-ta_0p^2}\exp\big\{-t\min\limits_{q\in\cR^d}c(q)\big\}.
\end{equation}
Hence, the function $f_{t,q}(\cdot)=(2\pi)^{-d/2}e^{-tH(q,\cdot)}\in L_1(\cR^d)$ for each $q\in\cR^d$ and $t\ge0$. Moreover,
$f_{t,q}(0)=(2\pi)^{-d/2}e^{-tc(q)}$. Therefore,  the  inverse Fourier transform  of  $f_{t,q}(\cdot)$ has the view $e^{-tc(q)}P^q_t(\cdot)$, where for each $q\in\cR^d$  and $t\ge0$ the function $P^q_t(\cdot)\in C_\infty(\cR^d)$  is a density  of a probability measure. This follows from the Bochner Theorem and the fact that Fourier transform maps $L_1(\cR^d)$ into $C_\infty(\cR^d)$.

 Consider first the case  $\tau=0$. Then for each $\ffi\in C^\infty_c(\cR^d)$ by Fubini--Tonelly Theorem we have
 \begin{align*}
 F_0(t)\ffi(q)&=  \frac{1}{(2\pi)^d}\intl_{\cR^d}\intl_{\cR^d}e^{i
p\cdot (q-q_1)} e^{-{tH(q_1,p)}}\ffi(q_1)dq_1dp=\\
 &=\intl_{\cR^d}\ffi(q_1)e^{-tc(q_1)}P_t^{q_1}(q-q_1)dq_1.
  \end{align*}
  Again by Fubini--Tonelli Theorem
 for each $\ffi\in C^\infty_c(\cR^d)$
 \begin{align*}
 \|F_0(t)\ffi\|_1&= \bigg\|\intl_{\cR^d}\ffi(q_1)e^{-tc(q_1)}P_t^{q_1}(q-q_1)dq_1  \bigg\|_1\le\\
 &
 \le\intl_{\cR^d}\intl_{\cR^d}|\ffi(q_1)|e^{-tc(q_1)}P_t^{q_1}(q-q_1)dq_1dq=\\
 &
 =\intl_{\cR^d}|\ffi(q_1)|e^{-tc(q_1)}\bigg[\intl_{\cR^d}P_t^{q_1}(q-q_1)dq\bigg]dq_1\le\\
 &
=\exp\big\{-t\min\limits_{x\in\cR^d}c(x)\big\}\|\ffi\|_1.
  \end{align*}
Therefore, for any  $\ffi\in C^\infty_c(\cR^d)$   we have    $F_0(t)\ffi\in L_1(\cR^d)$ and $F_0(t)$ is a bounded operator from $C^\infty_c(\cR^d)$ into $L_1(\cR^d)$. Then due to the B.L.T. Theorem the operator $F_0(t)$ can be extended to a bounded operator on $L_1(\cR^d)$ with the same norm. Hence, the Lemma is true for $\tau=0$.  Let us now prove the Lemma for  the case $\tau\in(0,1]$.

\smallskip

Let us now consider  the function $H(q,p)$ given by \eqref{def:H} as a sum of functions $h(q,p)$ and $r(p)$ (see formulas \eqref{def:h}, \eqref{def:r}, \eqref{def:H}). Under Assumption \ref{assum_smoothnessOfCoef} (i), (ii) consider a family $(G^\theta_{A,b,c}(t))_{t\ge0}$ of operators on $L_1(\cR^d)$ defined  for each fixed $\theta\in(0,1]$ by the formula
\begin{equation}\label{G-theta}
\begin{aligned}
&G^\theta_{A,b,c}(t)\ffi(q)=\frac{1}{(2\pi)^d}\intl_{\cR^d}\intl_{\cR^d}e^{i
p\cdot (q-q_1)} e^{-th(\theta q,p)}\ffi(q_1)dq_1dp\equiv\\
&
\equiv\frac{e^{-tc(\theta q)}}{(4\pi t)^{d/2}(\det A(\theta q))^{1/2}}\intl_{\cR^d} \exp\bigg\{-\frac{(q-q_1-tb(\theta q))\cdot A^{-1}(\theta q)(q-q_1-tb(\theta q))}{4t}   \bigg\}\ffi(q_1)dq_1.
\end{aligned}
\end{equation}
Each $G^\theta_{A,b,c}(t)$ is a integral operator with the  kernel  $g^{\theta q}_t(q-q_1)$, where
\begin{equation}\label{gausDensity}
g^{x}_t(z)=(4\pi t)^{-d/2}(\det A(x))^{-1/2}e^{-tc(x)}\exp\bigg\{-\frac{(z-tb(x))\cdot A^{-1}(x)(z-tb(x))}{4t}\bigg\},
\end{equation}
i.e. $g_t^x$ is an inverse Fourier transform of the function $(2\pi)^{-d/2}e^{-th(x,\cdot)}$. Due to \cite{Plyash1} there is a constant $k>0$ such that  for $\theta=1$ the estimate $\|G^1_{A,b,c}(t)\|\le e^{k t}$ holds.  For each fixed $\theta\in(0,1]$ the operator $G^\theta_{A,b,c}(t)$  equals the operator $G^1_{A_\theta,b_\theta,c_\theta}(t)$ with new coefficients  $A_\theta(q)=A(\theta q)$, $b_\theta(q)=b(\theta q)$, $c_\theta(q)=c(\theta q)$ which remain as smooth and bounded as the original $A$, $b$ and $c$ are. Therefore, the estimate $\|G^\theta_{A,b,c}(t)\|\le e^{k t}$ still holds for each $\theta\in(0,1]$.
Hence,  by Fubini--Tonelli Theorem for $\ffi\in C^\infty_c(\cR^d)$ we have
\begin{align}\label{form:Lagrange-family}
&F_\tau(t)\ffi(q)= (2\pi)^{-d}\intl_{\cR^d}\intl_{\cR^d}e^{i p\cdot (q-q_1)} e^{-{tH(\tau q+(1-\tau)q_1,p)}}\ffi(q_1)dq_1dp =\nonumber\\
&
= (2\pi)^{-d}\intl_{\cR^d}\ffi(q_1)\bigg[\intl_{\cR^d}e^{i p\cdot (q-q_1)} e^{-{th(\tau q+(1-\tau)q_1,p)}}e^{-{tr(p)}}dp\bigg]dq_1 =\nonumber\\
&
=\intl_{\cR^d}\ffi(q_1)\big[g^{\tau q+(1-\tau)q_1}_t*\mu_t   \big](q-q_1)dq_1.
\end{align}
Here the function in the squared brackets   for each fixed $q,q_1\in\cR^d$ and $\tau\in(0,1]$  is an inverse Fourier transform of the  product $e^{-{th(\tau q+(1-\tau)q_1,\cdot)}}\cdot (2\pi)^{-d/2}e^{-{tr(\cdot)}}$, i.e.  a convolution  of a function $g^{\tau q+(1-\tau)q_1}_t$ given by the formula  \eqref{gausDensity} and a probability measure $\mu_t$ corresponding to the L\'{e}vy process with the symbol $r$. And this function is taken at the point $(q-q_1)$. Hence,
with $y:=q+\frac{1-\tau}{\tau}q_1$  and $x:=q_1/\tau$
\begin{align*}
&\big\|F_\tau(t)\ffi\big\|_1= \intl_{\cR^d}\bigg|\intl_{\cR^d}\ffi(q_1)\big[g^{\tau q+(1-\tau)q_1}_t*\mu_t   \big](q-q_1)dq_1\bigg|dq\le\\
&
\le\intl_{\cR^d}\intl_{\cR^d}|\ffi(q_1)|\big[g^{\tau q+(1-\tau)q_1}_t*\mu_t   \big](q-q_1)dq_1dq=\\
&
=\intl_{\cR^{d}}\bigg[\intl_{\cR^d}\intl_{\cR^d}|\ffi(q_1)|g^{\tau q+(1-\tau)q_1}_t(q-q_1-z)dq_1dq\bigg]\mu_t(dz)\le\\
&
\le\intl_{\cR^{d}}\mu_t(dz)\cdot\sup\limits_{z\in\cR^d}\bigg[\tau^d\intl_{\cR^d}\intl_{\cR^d}g^{\tau y}_t(y-z-x)|\ffi(\tau x)|dxdy\bigg]=\\
&
=\tau^d\sup\limits_{z\in\cR^d}\intl_{\cR^d} G^\tau_{A,b,c}(t)|\ffi_\tau|(y-z)dy,
\end{align*}
where  $\ffi_\tau(q):=\ffi(\tau q)$ and the operator $G^\tau_{A,b,c}$ is given by the formula \eqref{G-theta} for each $\tau\in(0,1]$.  Therefore, due to the estimate $\|G^\tau_{A,b,c}(t)\|\le e^{k t}$  for each $\ffi\in C^\infty_c(\cR^d)$ we have
\begin{align*}
\big\|F_\tau(t)\ffi\big\|_1\le\tau^d\sup\limits_{z\in\cR^d}\intl_{\cR^d} G^\tau_{A,b,c}(t)|\ffi_\tau|(y-z)dy\le \tau^d e^{kt}\|\ffi_\tau\|_1 =\tau^d e^{kt}\intl_{\cR^d}|\ffi(\tau q)|dq=e^{kt}\|\ffi\|_1.
\end{align*}
Once again by $3\varepsilon$-argument the  estimate   $\big\|F_\tau(t)\ffi\big\|_1\le e^{kt}\|\ffi\|_1$ is true for all $\ffi\in L_1(\cR^d)$.
\end{proof}

\begin{remark}
Due to results of \cite{BSchS-IDAQPRT} in the case $\tau=1$  the statement of the Lemma is also valid in the space $X=C_\infty(\cR^d)$  with $k=0$.
 Therefore, in the case $\tau=1$  by Riesz--Thorin theorem the estimate \eqref{estimate:F(t)}  holds also in all spaces $L_\mathrm{p}(\cR^d)$, $\mathrm{p}\ge1$, (with some other constants $k$).
\end{remark}

\begin{remark} \label{rem:LagrangianFF} As it follows from the representation \eqref{form:Lagrange-family}, the operators  $F_\tau(t)$ can be considered as integral operators
\begin{equation}\label{formula:LagrangianFamily}
F_\tau(t)\ffi(q)=\intl_{\cR^d}\ffi(q_1)\big[g^{\tau q+(1-\tau)q_1}_t*\mu_t   \big](q-q_1)dq_1.
\end{equation}
Hence,  this representation can be used to construct a Lagrangian Feynman formula.
\end{remark}

\begin{lemma}\label{Lemma:H:str-cont+deriv-at-0}
Let $N\equiv 0$ in the formula~\eqref{def:r}, i.e. $H(q,p)=h(q,p)$.
 Under Assumption \ref{assum_smoothnessOfCoef} (i), (ii) and  Assumption \ref{assum_generate}
for  any $\ffi\in C_c^\infty(\cR^d)$, any $\tau\in[0,1]$ and any $t_0\ge0$
 we have  $$\liml_{t\to0}\left\|\frac{F_\tau(t)\ffi-\ffi}{t}+ \widehat{H}_\tau(\cdot,D)\ffi \right\|_1=0\quad \text{ and} \quad \liml_{t\to t_0}\|F_\tau(t)\ffi-F_\tau(t_0)\ffi\|_1=0.$$
\end{lemma}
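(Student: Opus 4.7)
Since $N\equiv 0$ the probability measure $\mu_t$ appearing in the kernel representation \eqref{formula:LagrangianFamily} collapses to $\delta_0$, so $F_\tau(t)$ is a purely Gaussian integral operator
\[
F_\tau(t)\ffi(q)=\intl_{\cR^d}g^{\tau q+(1-\tau)q_1}_t(q-q_1)\,\ffi(q_1)\,dq_1
\]
with $g^x_t$ as in \eqref{gausDensity}. All estimates will be carried out on this explicit representation. My overall plan is: (i) obtain pointwise-in-$q$ asymptotics in $t$ by a parabolic rescaling and Taylor expansion, and (ii) lift these pointwise statements to $L_1$ using the uniform operator bound from Lemma~\ref{Lemma_tau-norm estim}, the compact support of $\ffi\in C^\infty_c(\cR^d)$, and dominated convergence.

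For the strong continuity, I would substitute $y=q_1-q$, so that $F_\tau(t)\ffi(q)=\intl_{\cR^d}g^{q+(1-\tau)y}_t(-y)\,\ffi(q+y)\,dy$. At $t_0>0$ the integrand is continuous in $t$ and, locally around $t_0$, dominated by a fixed Gaussian bound (using Assumption~\ref{assum_smoothnessOfCoef}(i),(ii)); dominated convergence gives pointwise continuity in $q$, and the uniform bound from Lemma~\ref{Lemma_tau-norm estim} together with a $3\varepsilon$-argument against the dense subspace $C^\infty_c(\cR^d)$ upgrades this to $L_1$-convergence. At $t_0=0$, after the rescaling $y=\sqrt{t}w$ the family $g^{q+(1-\tau)y}_t(-y)\,dy$ is an approximation of the identity, so $F_\tau(t)\ffi\to\ffi$ pointwise, and the compactness of $\supp\ffi$ combined with Lemma~\ref{Lemma_tau-norm estim} delivers the $L_1$-limit by dominated convergence.

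For the derivative at the origin, the parabolic rescaling $y=\sqrt{t}w$ yields
\[
F_\tau(t)\ffi(q)=\intl_{\cR^d}\rho^{q,\tau}_t(w)\,\ffi(q+\sqrt{t}w)\,dw,
\]
where $\rho^{q,\tau}_t(w)$ is a perturbed Gaussian probability density in $w$ whose covariance, drift and scalar prefactor are evaluated at the intermediate point $q+(1-\tau)\sqrt{t}w$, and which converges as $t\to 0$ to the Gaussian $\rho_0^q(w):=(4\pi)^{-d/2}(\det A(q))^{-1/2}\exp\{-\tfrac{1}{4}w^\top A^{-1}(q)w\}$ whose covariance matrix equals $2A(q)$. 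I would Taylor-expand $\ffi(q+\sqrt{t}w)$ to second order and the coefficients $A,b,c$ (together with $A^{-1}$, $(\det A)^{-1/2}$) in $\sqrt{t}$, and collect powers of $t$: the $O(1)$ term reproduces $\ffi(q)$; the $O(\sqrt{t})$ terms vanish as odd Gaussian moments; the $O(t)$ terms, using $\intl_{\cR^d} w_iw_j\rho_0^q(w)\,dw=2A_{ij}(q)$, reassemble precisely into $-\widehat{H}_\tau(q,D)\ffi(q)$ as given by \eqref{formula:tau-to-1}. The $(1-\tau)$ and $(1-\tau)^2$ corrections to the coefficients of $\nabla\ffi$ and of $\ffi$ arise from Taylor expanding $A$ and $b$ at the intermediate point rather than at $q$. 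To pass to $L_1$, the Taylor remainder (after division by $t$) is bounded pointwise by a product of a polynomial in $w$, the Gaussian $\rho^{q,\tau}_t(w)$, derivatives of $A,b,c$ of order $\le 4$ (bounded by Assumption~\ref{assum_smoothnessOfCoef}(ii)), and derivatives of $\ffi$ of order $\le 4$ (bounded with compact support); dominated convergence in $q$ then yields $\|(F_\tau(t)\ffi-\ffi)/t+\widehat{H}_\tau(\cdot,D)\ffi\|_1\to 0$.

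The main obstacle is the careful bookkeeping of the $O(t)$ contributions so that they reassemble into the $\tau$-dependent expression \eqref{formula:tau-to-1}: the Hessian of $\ffi$ contracted with the Gaussian covariance $2A(q)$, the cross-term between the drift $\sqrt{t}b$ in the exponent and $\sqrt{t}w\cdot\nabla\ffi(q)$, the scalar factor $e^{-tc}$, and the $(1-\tau)\sqrt{t}w$-shift in the arguments of $A^{-1}$, $(\det A)^{-1/2}$ and $b$ must conspire to produce exactly the coefficients $b-2(1-\tau)\Div A$ and $c+(1-\tau)\Div b-(1-\tau)^2\tr(\Hess A)$. Securing a genuine $o(t)$ (rather than merely $O(t)$) $L_1$-remainder is what forces the fourth-order smoothness in Assumption~\ref{assum_smoothnessOfCoef}(ii).
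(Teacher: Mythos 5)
Your argument is sound, but it follows a genuinely different route from the paper's. The paper never touches the Gaussian kernel for this lemma: it applies Taylor's formula with Lagrange remainder to the \emph{symbol}, writing $e^{-th}=1-th+\tfrac{t^2}{2}h^2e^{-\theta th}$, so that $\frac{F_\tau(t)\ffi-\ffi}{t}+\widehat H_\tau(\cdot,D)\ffi$ becomes $t$ times the $\Psi$DO with $\tau$-symbol proportional to $h^2e^{-\theta th}$; since $h^2$ is a quartic polynomial in $p$, the factors $p^4$ are converted into $\pd^4_{q_1}e^{ip\cdot(q-q_1)}$ and integrated by parts onto $A^2(\tau q+(1-\tau)q_1)e^{-\theta tA(\cdot)p^2}\ffi(q_1)$, after which the rescaling $\rho=\sqrt{\theta t}\,p$ and the bounds $a_0|p|^2\le p\cdot A p\le A_0|p|^2$ give an $O(t)\sum_k\|\ffi^{(k)}\|_1$ estimate (see \eqref{calculations}). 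This works directly with the definition \eqref{def-PDO-H} and needs exactly the four bounded derivatives of Assumption \ref{assum_smoothnessOfCoef}(ii); it is also the version that extends to the jump case via Lemma \ref{lemma:order of PDO}. Your route instead starts from the kernel representation \eqref{formula:LagrangianFamily} with $\mu_t=\delta_0$, performs the parabolic rescaling, and matches the $O(t)$ coefficient against the differential-operator form \eqref{formula:tau-to-1}. That is legitimate, but note two things. First, you are implicitly invoking Remark \ref{rem:tau connection} (i.e.\ the identity between the $\Psi$DO \eqref{def-PDO-H} and the second-order operator \eqref{formula:tau-to-1}), which the paper imports from elsewhere and whose proof is essentially the bookkeeping you defer; the paper's symbol-level argument avoids this dependency entirely. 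Second, your closing remark misattributes the role of the fourth-order smoothness: your rescaled Taylor expansion only needs the coefficients expanded to second order with a controlled (third-order) remainder, so three bounded derivatives would suffice for your route; the fourth order in Assumption \ref{assum_smoothnessOfCoef}(ii) is what the paper's four-fold integration by parts against the quartic $h^2$ actually consumes. Your treatment of the strong continuity (domination near $t_0>0$, approximate identity at $t_0=0$, then the $3\varepsilon$-extension to $L_1$) matches the paper's in substance. What your approach buys is transparency about where the $\tau$-dependent corrections $-2(1-\tau)\Div A$ and $(1-\tau)\Div b-(1-\tau)^2\tr(\Hess A)$ come from; what the paper's buys is independence from \eqref{formula:tau-to-1} and a template that survives the addition of the L\'evy part $r(p)$.
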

\begin{proof}
Let $\ffi\in C^\infty_c(\cR^d)\subset\D(\widehat{H}_\tau(\cdot,D))$. By Taylor's formula with $\theta\in(0,1)$ we have
\begin{align*}
&\left\|\frac{F_\tau(t)\ffi-\ffi}{t}+ \widehat{H}_\tau(\cdot,D)\ffi \right\|_1=\\
&
=t\intl_{\cR^d}\left|  (2\pi)^{-d}\intl_{\cR^d}\intl_{\cR^d}e^{i
p\cdot (q-q_1)} h^2( \tau q+(1-\tau)q_1,p)e^{-{\theta t h(\tau q+(1-\tau)q_1,p)}}\ffi(q_1)dq_1dp  \right|dq.
\end{align*}
Here $h^2(\tau q+(1-\tau)q_1,p)$ is a 4th order polynomial with respect to the variable $p$ with bounded continuously depending on $\tau q+(1-\tau)q_1$ coefficients.  Let us present  the calculations for the case $d=1$ and $b(q)\equiv0$, $c(q)\equiv0$ for simplicity. General case  can  be handled similarly.
\begin{align}\label{calculations}
&\left\|\frac{F_\tau(t)\ffi-\ffi}{t}+ \widehat{H}_\tau(\cdot,D)\ffi \right\|_1=\nonumber\\
&
=t\intl_{\cR}\left| (2\pi)^{-1}\intl_{\cR}\intl_{\cR}e^{i
p\cdot (q-q_1)} A^2(\tau q+(1-\tau)q_1)p^4e^{-{\theta t A(\tau q+(1-\tau)q_1)p^2}}\ffi(q_1)dq_1dp  \right|dq=\nonumber\\
&
=t\intl_{\cR}\left|  (2\pi)^{-1}\intl_{\cR}\intl_{\cR}\pd^4_{q_1} \left[ e^{i
p\cdot (q-q_1)}\right]\cdot\left[ A^2(\tau q+(1-\tau)q_1)e^{-{\theta t A(\tau q+(1-\tau)q_1)p^2}}\ffi(q_1)\right]dq_1dp  \right|dq=\nonumber\\
&
=t\intl_{\cR}\left|  (2\pi)^{-1}\intl_{\cR}\intl_{\cR}\left[ e^{i
p\cdot (q-q_1)}\right]\cdot \pd^4_{q_1} \left[ A^2(\tau q+(1-\tau)q_1)e^{-{\theta t A(\tau q+(1-\tau)q_1)p^2}}\ffi(q_1)\right]dq_1dp  \right|dq.
\end{align}
Consider first the case   $\tau=1$. Then
$$
\pd^4_{q_1} \left[ A^2(\tau q+(1-\tau)q_1)e^{-{\theta t A(\tau q+(1-\tau)q_1)p^2}}\ffi(q_1)\right]=
 A^2( q)e^{-{\theta t A( q)p^2}}\ffi^{(4)}(q_1)
$$
 and by the Fubini--Tonelli  theorem
\begin{align*}
&\left\|\frac{F_1(t)\ffi-\ffi}{t}+ \widehat{H}_1(\cdot,D)\ffi \right\|_1=t\intl_{\cR}\left|  (2\pi)^{-1}\intl_{\cR}\intl_{\cR}\left[ e^{i
p\cdot (q-q_1)}\right]\cdot \left[ A^2( q)e^{-{\theta t A( q)p^2}}\ffi^{(4)}(q_1)\right]dq_1dp  \right|dq=\\
&
=t \intl_{\cR}\left|  A^2( q)\intl_{\cR} (4\pi \theta t A(q))^{-1/2}e^{-\frac{(q-q_1)^2}{4\theta t A(q)}}\ffi^{(4)}(q_1)dq_1  \right|dq\le\\
&
\le t  A_0^2 \intl_{\cR} \intl_{\cR} (4\pi \theta t a_0)^{-1/2}e^{-\frac{(q-q_1)^2}{4\theta t A_0}}|\ffi^{(4)}(q_1)|dq_1  dq=t(A_0^{5/2}a_0^{-1/2})\|\ffi^{(4)}\|_1.
\end{align*}
Consider now the case when $\tau\in[0,1)$. Then
$$
\pd^4_{q_1} \left[ A^2(\tau q+(1-\tau)q_1)e^{-{\theta t A(\tau q+(1-\tau)q_1)p^2}}\ffi(q_1)\right]=
e^{-{\theta t A(\tau q+(1-\tau)q_1)p^2}}\suml_{k=0}^4 (\theta t p^2)^k\psi_k(\tau q+(1-\tau)q_1, q_1),
$$
where functions $\psi_k(x, y)$ are some linear combinations of products $A^{k}(x)(A^2)^{(m)}(x)\ffi^{(n)}(y)$ with $m,n=0,\ldots,4$. Hence $\psi_k(x,\cdot)\in C_c(\cR)$ and $\psi_k(\cdot,y)\in C_b(\cR)$ for all $x,y\in\cR$. Therefore,  with the change of variables $\sqrt{\theta t}p=\rho$, $\frac{q-q_1}{\sqrt{\theta t}}=y$ we have
\begin{align*}
&\left\|\frac{F_\tau(t)\ffi-\ffi}{t}+ \widehat{H}_\tau(\cdot,D)\ffi \right\|_1=\\
&
=t\intl_{\cR}\left| (2\pi)^{-1}\intl_{\cR}\intl_{\cR}e^{i
p\cdot (q-q_1)} e^{-{\theta t A(\tau q+(1-\tau)q_1)p^2}}\suml_{k=0}^4 (\theta t p^2)^k\psi_k(\tau q+(1-\tau)q_1, q_1)dq_1dp  \right|dq=\\
&
=t\intl_{\cR}\left| (2\pi)^{-1}\intl_{\cR}\intl_{\cR}e^{i
\rho\cdot y} e^{-{  A( q-\sqrt{\theta t}(1-\tau)y)\rho^2}}\suml_{k=0}^4 (\rho^2)^k\psi_k(q-\sqrt{\theta t}(1-\tau)y, q-\sqrt{\theta t}y)d\rho dy  \right|dq=\\
&
=t\intl_{\cR}\left| \intl_{\cR} \suml_{k=0}^4 (-1)^k \pd^{2k}_\xi \left[\frac{\exp\left\{-\frac{\xi^2}{4  A(q-\sqrt{\theta t}(1-\tau)y)}\right\}}{(4\pi   A(q-\sqrt{\theta t}(1-\tau)y))^{1/2}}\right]\bigg|_{\xi=y}
 \psi_k(q-\sqrt{\theta t}(1-\tau)y, q-\sqrt{\theta t}y) dy  \right|dq\le\\
 &
 \le t \intl_{\cR}(4\pi   a_0)^{-1/2}e^{-\frac{y^2}{4  A_0}}C_5(1+y^8)\suml_{k=0}^4 C_k \intl_{\cR}|\ffi^{(k)}(q-\sqrt{\theta t}y)|dqdy\le\\
 &
 \le t\suml_{k=0}^4 C'_k\|\ffi^{(k)}\|_1
\end{align*}
with some positive constants  $C_k$  and $C'_k$.
Analogously, for $\ffi\in C_c^\infty(\cR^d)$ by Taylor's formula with $\theta\in(0,1)$ and $t,t_0\ge0$, $t\to t_0$ we have
\begin{align*}
&\left\|F_\tau(t)\ffi-F_\tau(t_0)\ffi \right\|_1=\\
&
=|t-t_0|\intl_{\cR^d}\left|  (2\pi)^{-d}\intl_{\cR^d}\intl_{\cR^d}e^{i
p\cdot (q-q_1)} h( \tau q+(1-\tau)q_1,p)e^{-{[t_0+\theta( t-t_0)] h(\tau q+(1-\tau)q_1,p)}}\ffi(q_1)dq_1dp  \right|dq.
\end{align*}
Once again let us present  the calculations for the case $d=1$ and $b(q)\equiv0$, $c(q)\equiv0$ for simplicity. For any fixed $t_0>0$ take $t\in(t_0/2, 2t_0)$. Hence, $\alpha(t):=t_0+\theta( t-t_0)\in (t_0/2, 2t_0)$ and
\begin{align*}
&\left\|F_\tau(t)\ffi-F_\tau(t_0)\ffi \right\|_1=\\
&
=|t-t_0|\intl_{\cR}\left|  (2\pi)^{-1}\intl_{\cR}\intl_{\cR}e^{i
p\cdot (q-q_1)} A( \tau q+(1-\tau)q_1)p^2e^{-{\alpha(t) A( \tau q+(1-\tau)q_1)p^2}}\ffi(q_1)dq_1dp  \right|dq=\\
&
=|t-t_0|\intl_{\cR}\left|  \intl_{\cR}\pd^2_\xi\left[ (4\pi\alpha(t)A(\tau q+(1-\tau)q_1))^{-1/2}e^{-\frac{\xi^2}{4\alpha(t)A(\tau q+(1-\tau)q_1)}}\right]\bigg|_{\xi=q-q_1}\ffi(q_1)dq_1  \right|dq\le\\
&
\le |t-t_0|\intl_{\cR}  \intl_{\cR} (2\pi t_0a_0)^{-1/2}e^{-\frac{(q-q_1)^2}{8t_0 A_0}}C(t_0)(1+(q-q_1)^2)|\ffi(q_1)|dq_1  dq\le\\
&
\le |t-t_0|C'(t_0)\|\ffi\|_1
\end{align*}
with some  positive constants $C$ and $C'$ depending only on $t_0$.
In the case $t_0=0$ we have $F(t_0)=\id$ and we  proceed as before
\begin{align*}
&\left\|F_\tau(t)\ffi-\ffi \right\|_1=\\
&
=t\intl_{\cR}\left|  (2\pi)^{-1}\intl_{\cR}\intl_{\cR}e^{i
p\cdot (q-q_1)} A( \tau q+(1-\tau)q_1)p^2e^{-{\theta t A( \tau q+(1-\tau)q_1)p^2}}\ffi(q_1)dq_1dp  \right|dq=\\
&
=t\intl_{\cR}\left|  (2\pi)^{-1}\intl_{\cR}\intl_{\cR}\pd^2_{q_1}\left[e^{i
p\cdot (q-q_1)}\right] A( \tau q+(1-\tau)q_1)e^{-{\theta t A( \tau q+(1-\tau)q_1)p^2}}\ffi(q_1)dq_1dp  \right|dq=\\
&
=t\intl_{\cR}\left|  (2\pi)^{-1}\intl_{\cR}\intl_{\cR}e^{i
p\cdot (q-q_1)}\pd^2_{q_1}\left[A( \tau q+(1-\tau)q_1)e^{-{\theta t A( \tau q+(1-\tau)q_1)p^2}}\ffi(q_1)\right] dq_1dp  \right|dq\le\\
&
\le t\suml_{k=0}^2  C_k\|\ffi^{(k)}\|_1,
\end{align*}
where the integrals in the penultimate line can be handled as  in \eqref{calculations}.  Three epsilon argument concludes the proof  of identity $\liml_{t\to t_0}\|F_\tau(t)\ffi-F_\tau(t_0)\ffi\|_1=0$ for all $\ffi\in L_1(\cR^d)$.
\end{proof}

\begin{theorem}\label{Theo:FF-tau}
Let $X=L_1(\cR^d)$, $\tau\in[0,1]$ and $H(q,p)=h(q,p)$, where $h(q,p)$ is given by the formula \eqref{def:h}.
 Under Assumption \ref{assum_smoothnessOfCoef} (i), (ii) and Assumption \ref{assum_generate} the family $(F_\tau(t))_{t\ge0}$ given by the formula~\eqref{F0} is Chernoff equivalent to the semigroup $(T^\tau_t)_{t\ge0}$, generated by the closure $(L^\tau,\D(L^\tau))$ of a $\psi$DO $(\widehat{H}_\tau(\cdot,D),C^\infty_c(\cR^d))$ with the $\tau$-symbol $H(q,p)$. Therefore, the  Feynman formula
\begin{equation}\label{form:FF-tau}
 (T^\tau_t)\ffi=\lim_{n\to\infty}(F_\tau(t/n))^n\ffi
\end{equation}
 holds in $X=L_1(\cR^d)$ locally uniformly with respect to  $t$.  Moreover, this Feynman formula \eqref{form:FF-tau} converts into the Lagrangian one:
 \begin{align}\label{formula:LFF-tau}
 &(T^\tau_t)\ffi(q_0)=\liml_{n\to\infty}\intl_{\cR^{d}}\cdots\intl_{\cR^{d}}\ffi(q_n)\prodl_{k=1}^n g^{\tau q_{k-1}+(1-\tau)q_k}_{t/n}(q_{k-1}-q_k)dq_1\ldots dq_n,
 \end{align}
 where Gaussian type density $g^x_t(z)$  is given by the formula \eqref{gausDensity}. Additionally, under Assumption \ref{assum_smoothnessOfCoef} (iii)  we have $F_\tau(t): S(\cR^d)\to S(\cR^d)$  and the Feynman formula \eqref{form:FF-tau} with $\ffi\in S(\cR^d)$ converts also into the Hamiltonian one:
 \begin{align}\label{hff-tau}
&(T^\tau_t)\ffi(q_0)=\lim_{n\to\infty}(2\pi)^{-nd}\times\\
&
\times\intl_{\cR^{2nd}}\exp\left\{i\suml_{k=1}^n p_k\cdot (q_{k+1}-q_k)\right\} \exp\left\{-\frac{t}{n}\suml_{k=1}^n H(\tau q_{k+1}+(1-\tau)q_k,p_k)\right\}\ffi(q_1)dq_1dp_1\ldots dq_ndp_n,\nonumber
\end{align}
where $q_{n+1}:=q_0$ for all $n\in \mathbb{N}$ in the pre-limit expressions in the right hand side.
\end{theorem}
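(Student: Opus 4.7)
The strategy is to verify the hypotheses of the Chernoff theorem (Theorem~\ref{Chernoff}) for the family $(F_\tau(t))_{t\ge0}$, and then to unfold the iterated operator $[F_\tau(t/n)]^n$ in two different ways to obtain the Lagrangian and Hamiltonian representations. The three main ingredients are already in place: Lemma~\ref{Lemma_tau-norm estim} furnishes the exponential norm bound, Lemma~\ref{Lemma:H:str-cont+deriv-at-0} furnishes both strong continuity and the identification $F_\tau'(0)\ffi=-\widehat{H}_\tau(\cdot,D)\ffi$ on $C^\infty_c(\cR^d)$, and Assumption~\ref{assum_generate} tells us that the closure of $(\widehat{H}_\tau(\cdot,D),C^\infty_c(\cR^d))$ is the generator $L^\tau$ of the semigroup $(T^\tau_t)_{t\ge 0}$.

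First, I would check that $F_\tau(0)=\id$ by Fourier inversion in the defining formula~\eqref{F0} (for $\ffi\in C^\infty_c$, and then extended by density via Lemma~\ref{Lemma_tau-norm estim}). Strong continuity of $t\mapsto F_\tau(t)$ in the operator topology on $L_1(\cR^d)$ follows from the pointwise continuity on the core $C^\infty_c(\cR^d)$ supplied by Lemma~\ref{Lemma:H:str-cont+deriv-at-0}, together with the uniform bound $\|F_\tau(t)\|\le e^{kt}$ of Lemma~\ref{Lemma_tau-norm estim} and a standard $3\varepsilon$-argument. Taking $D:=C^\infty_c(\cR^d)$, the restriction of $F_\tau'(0)$ to $D$ equals $-\widehat{H}_\tau(\cdot,D)\big|_D$ (again by Lemma~\ref{Lemma:H:str-cont+deriv-at-0}); by Assumption~\ref{assum_generate} its closure is exactly the generator $-L^\tau$ of $(T^\tau_t)_{t\ge 0}$. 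Chernoff's theorem then yields \eqref{form:FF-tau}, locally uniformly in $t$.

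Next, to derive the Lagrangian Feynman formula \eqref{formula:LFF-tau}, I would simply iterate the integral representation of Remark~\ref{rem:LagrangianFF}: applying \eqref{formula:LagrangianFamily} $n$ times with time step $t/n$, the kernel of $[F_\tau(t/n)]^n$ is the product of the kernels $g^{\tau q_{k-1}+(1-\tau)q_k}_{t/n}(q_{k-1}-q_k)$ (here $N\equiv 0$ so the L\'evy convolution factor $\mu_{t/n}$ drops out and only the Gaussian type density remains). Fubini--Tonelli, justified by the estimates already used in the proof of Lemma~\ref{Lemma_tau-norm estim}, gives the $n$-fold integral in \eqref{formula:LFF-tau}. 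For the Hamiltonian Feynman formula~\eqref{hff-tau}, I would use the $\Psi$DO definition~\eqref{F0} of $F_\tau(t/n)$ directly and compose $n$ copies. Under Assumption~\ref{assum_smoothnessOfCoef}(iii) one first verifies that $F_\tau(t):S(\cR^d)\to S(\cR^d)$ so that the composition is well defined on Schwartz functions; since $e^{-(t/n)H(\tau q+(1-\tau)q_1,p)}$ decays as a Gaussian in $p$ uniformly in $q$ by~\eqref{estimate1}, every intermediate $q_k$-integration can be carried out absolutely, and rearranging produces the $2nd$-dimensional oscillatory integral in \eqref{hff-tau} with the convention $q_{n+1}:=q_0$ arising from pairing the outermost oscillatory exponential to the initial point.

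The main technical obstacle, and the step I would want to carry out most carefully, is the passage to the Hamiltonian formula: both the $S(\cR^d)$-invariance of $F_\tau(t)$ and the legitimacy of rewriting the iterated composition as a single $2nd$-fold integral require uniform-in-$q,p$ control of the symbols $e^{-(t/n)H(\tau q+(1-\tau)q_1,p)}$ and their $q$-derivatives, which uses Assumption~\ref{assum_smoothnessOfCoef}(i),(iii) in an essential way. The Lagrangian formula and the core Chernoff application are comparatively routine, relying chiefly on Lemmas~\ref{Lemma_tau-norm estim} and~\ref{Lemma:H:str-cont+deriv-at-0} already established.
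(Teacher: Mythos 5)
Your proposal follows essentially the same route as the paper: the paper's own proof is a one-line appeal to Lemma~\ref{Lemma_tau-norm estim}, Lemma~\ref{Lemma:H:str-cont+deriv-at-0}, Remark~\ref{rem:LagrangianFF} and the Chernoff Theorem~\ref{Chernoff}, which is exactly the combination you assemble (norm bound, strong continuity plus derivative at zero on the core $C^\infty_c(\cR^d)$, the integral-kernel representation \eqref{formula:LagrangianFamily} for the Lagrangian form, and the $\Psi$DO definition \eqref{F0} for the Hamiltonian form). Your additional attention to the $S(\cR^d)$-invariance under Assumption~\ref{assum_smoothnessOfCoef}(iii) matches what the paper invokes for the Hamiltonian formula, so the argument is correct and complete.
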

This  Theorem follows immediately from two previous Lemmas, Remark \ref{rem:LagrangianFF} and the Chernoff Theorem \ref{Chernoff}.

\bigskip
\begin{remark}
If we consider the case $H(q,p)=p\cdot Ap+c(q)$, where the matrix $A$ doesn't depend on $q$   (it is the Hamilton function of a particle with constant mass in a potential field $c$), then $\psi$DOs $\widehat{H}_\tau(\cdot,D)$ (and hence the semigroups $(T^\tau_t)_{t\ge0}$ ) do coincide for all $\tau\in[0,1]$. However, the families $(F_\tau(t))_{t\ge0}$, given by \eqref{F0}, are different since  they are $\psi$DOs whose $\tau$-symbols $e^{-t[ p\cdot Ap+c(q)]}$ nontrivially depend on both variables $q$ and $p$. Nevertheless, one can easily show, that $\|F_{\tau_1}(t)\ffi-F_{\tau_2}(t)\ffi\|_1=C(\tau_1,\tau_2)t$ with some constant $C$ depending only on $\tau_1$ and $\tau_2$ (see \cite{OSS} for deeper discussion).
\end{remark}
\begin{remark}
Let  $\tau=1$. Under Assumptions \ref{assum_smoothnessOfCoef} (i), (ii)and  Assumption \eqref{assum_generate} 
the semigroup  $(T_t^\tau)_{t\ge0}$ can be represented also by a Feynman--Kac formula  (cf. \cite{Freidlin},  \cite{Friedman}, \cite{Karatzas-Shreve}):
\begin{equation}\label{FKF-parabolic}
T^\tau_t\varphi(q_0)=\mathbb{E}_b^{q_0} \bigg[\exp\bigg(\intl_0^t
c(\xi_s)ds\bigg)\ffi(\xi_t ) \bigg],
\end{equation}
where      $\mathbb{E}_b^{q_0}$  is the expectation of a (starting at  $q_0$) diffusion process  $(\xi_t)_{t\ge0}$ with  variable diffusion matrix
${A(\cdot)}$ and drift $b(\cdot)$.  Therefore, Lagrangian Feynman formula  \eqref{formula:LFF-tau} gives (suitable for direct calculations) approximations of a functional integral in the Feynman--Kac formula  \eqref{FKF-parabolic}.  Moreover, using the representation \eqref{FKF-parabolic} for the case $b(\cdot)\equiv0 $ one can show that the expression in the right hand side of the Lagrangian Feynman formula \eqref{formula:LFF-tau} does coincide with the following functional integral
\begin{align}\label{FKF-parabolic+}
T^\tau_t\varphi(q_0)=&\mathbb{E}^{q_0} \bigg[\exp\bigg(\intl_0^t c(X_s)ds\bigg)\exp\bigg(\frac12\intl_0^t A^{-1}(X_s)b(X_s )\cdot dX_s)\bigg) \times\\
&
\phantom{qwqwqwqwqwqw}\times\exp\bigg(-\frac{1}{4}\intl_0^t A^{-1}(X_s)b(X_s)\cdot b(X_s)ds \bigg)\ffi(X_t ) \bigg],\nonumber
\end{align}
where $\mathbb{E}^{q_0}$ is the expectation of a diffusion process  $(X_t)_{t\ge0}$ with variable diffusion matrix  $A(\cdot)$ and without any  drift, a stochastic integral   $\intl_0^t A^{-1}(X_\tau)b(X_\tau )\cdot dX_\tau$ is an It\^{o} integral. Since the functional integrals in formulae  \eqref{FKF-parabolic} and \eqref{FKF-parabolic+} coincide, one obtains the analogue of the Girsanov--Cameron--Martin--Reimer--Maruyama  formula for the case of diffusion processes with variable diffusion matrices. Due to Remark \ref{rem:tau connection} the similar results are valid for all $\tau\in[0,1]$.

\end{remark}
\begin{lemma}
Consider the general case of symbol $H(q,p)=h(q,p)+r(p)$ given by the formula \eqref{def:H} and $\tau=1$. Under Assumption \ref{assum_smoothnessOfCoef} (i), (iii), (iv) and Assumption \ref{assum_generate}
for  any $\ffi\in C_c^\infty(\cR^d)$ and $t_0\ge0$
 we have  $$\liml_{t\to0}\left\|\frac{F_\tau(t)\ffi-\ffi}{t}+ \widehat{H}_\tau(\cdot,D)\ffi \right\|_1=0\quad \text{ and} \quad \liml_{t\to t_0}\|F_\tau(t)\ffi-F_\tau(t_0)\ffi\|_1=0.$$
\end{lemma}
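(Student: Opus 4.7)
The plan is to reduce this lemma to the already-proved Lemma \ref{Lemma:H:str-cont+deriv-at-0} by splitting the symbol $e^{-tH(q,p)}=e^{-th(q,p)}\cdot e^{-tr(p)}$ into a factor depending on both variables and a factor depending only on $p$. Since $\tau=1$, Lemma \ref{lemma:order of PDO} applies (with $f\equiv 1$, $g(p)=e^{-tr(p)}$, $\lambda(q,p)=e^{-th(q,p)}$) and yields the factorization
$$
F_1(t)\ffi = G^1_{A,b,c}(t)\bigl(\mu_t*\ffi\bigr),
$$
where $G^1_{A,b,c}(t)$ is the Gaussian type operator from \eqref{G-theta} and $\mu_t$ is the probability measure whose Fourier transform equals $e^{-tr(\cdot)}$, i.e.\ the transition measure of the Lévy process with symbol $r$. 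Assumption \ref{assum_smoothnessOfCoef}(iv) guarantees that $\mu_t*\ffi$ lies in a class on which the Gaussian factor acts nicely, so the composition is well defined.

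For the derivative at the origin I would decompose
$$
\frac{F_1(t)\ffi-\ffi}{t}
=G^1_{A,b,c}(t)\cdot\frac{\mu_t*\ffi-\ffi}{t}+\frac{G^1_{A,b,c}(t)\ffi-\ffi}{t}.
$$
The second term converges in $L_1$ to $-\widehat{h}_1(\cdot,D)\ffi$ by Lemma \ref{Lemma:H:str-cont+deriv-at-0} (the case $N\equiv 0$), and $G^1_{A,b,c}(t)\to \id$ strongly with $\|G^1_{A,b,c}(t)\|\le e^{kt}$ by Lemma \ref{Lemma_tau-norm estim}. For the first factor one must show that for $\ffi\in C_c^\infty(\cR^d)$ one has $t^{-1}(\mu_t*\ffi-\ffi)\to \widehat{r}(D)\ffi$ in $L_1$; this is a standard Lévy-generator calculation: Taylor expand $\ffi(q+y)$ to second order, split the $y$ integral into $|y|\le 1$ and $|y|>1$, and use the moment bound $\int |y|^2/(1+|y|^2)\,N(dy)<\infty$ together with the compact support of $\ffi$ to get an integrable majorant in $q$ for dominated convergence. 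Summing the two limits and identifying the result via \eqref{formula:tau-to-1} gives $-\widehat{H}_1(\cdot,D)\ffi$, as required.

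For the strong continuity at $t_0$ I would use the same factorization to write
$$
F_1(t)\ffi-F_1(t_0)\ffi=G^1_{A,b,c}(t)\bigl((\mu_t-\mu_{t_0})*\ffi\bigr)+\bigl(G^1_{A,b,c}(t)-G^1_{A,b,c}(t_0)\bigr)(\mu_{t_0}*\ffi),
$$
bound the first summand by $e^{kt}\|(\mu_t-\mu_{t_0})*\ffi\|_1$ (which tends to $0$ by strong $L_1$-continuity of the convolution semigroup $\{\mu_t\}_{t\ge 0}$), and bound the second one via Lemma \ref{Lemma:H:str-cont+deriv-at-0} applied to $\mu_{t_0}*\ffi$, approximating this $L_1$-function by $C_c^\infty$ test functions and using a three-$\varepsilon$ argument together with the uniform estimate $\|G^1_{A,b,c}(t)\|\le e^{kt}$. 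The main technical obstacle is the $L_1$-convergence of $t^{-1}(\mu_t*\ffi-\ffi)$: one needs a $q$-integrable majorant for the Taylor remainders that is compatible with the uniform boundedness of $G^1_{A,b,c}(t)$ near $t=0$. The compact support of $\ffi$ and the smoothness Assumption \ref{assum_smoothnessOfCoef}(iii), (iv) make such a majorant easy to construct, with the small-jump part controlled by $\|\ffi''\|_\infty\cdot\mathbf{1}_{\operatorname{supp}\ffi+\{|y|\le 1\}}$ and the large-jump part by a convolution with $\ffi\in L_1$.
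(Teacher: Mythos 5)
Your argument is correct, but it takes a genuinely different route from the paper's. The paper never differentiates the factorization $F_1(t)=G^1_{A,b,c}(t)\circ(\mu_t*\cdot)$ as a product; instead it applies Taylor's formula in $t$ to the whole symbol, writing $\frac{F_1(t)\ffi-\ffi}{t}+\widehat{H}_1(\cdot,D)\ffi=t\,\big(\widehat{H^2e^{-\theta H}}\big)_1(\cdot,D)\ffi$ and $F_1(t)\ffi-F_1(t_0)\ffi=(t-t_0)\big(\widehat{He^{-\theta H}}\big)_1(\cdot,D)\ffi$, and then uses Lemma \ref{lemma:order of PDO} to factor each term of $H^2e^{-\theta H}=(h^2+2hr+r^2)e^{-\theta h}e^{-\theta r}$ into a Gaussian-type $\Psi$DO already estimated in Lemma \ref{Lemma:H:str-cont+deriv-at-0}, composed with convolution against $\mu_\theta$ or with $\widehat{r}$, $\widehat{r^2}$ acting on $\ffi$ inside $S(\cR^d)$. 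This yields the explicit bounds $tK'(\ffi)$ and $|t-t_0|K(t_0,\ffi)$ in one stroke and completely sidesteps what you identify as your main technical obstacle, the $L_1$-differentiability of $t\mapsto\mu_t*\ffi$ at $t=0$. Your product-rule decomposition buys modularity (diffusion part and jump part are handled by separate, self-contained limits), but the one step you must still supply with care is precisely that obstacle: your sketch (Taylor expansion in $y$, splitting small and large jumps, dominated convergence) really only shows that the candidate limit is a well-defined $L_1$ function, not that the time difference quotient converges to it in $L_1$; to close this either invoke that $C^\infty_c(\cR^d)$ lies in the domain of the $L_1$-generator of the convolution semigroup $(\mu_t*\cdot)_{t\ge0}$, or argue as the paper implicitly does by applying Taylor's formula in $t$ to the factor $e^{-tr}$ alone, which gives $\big\|t^{-1}(\mu_t*\ffi-\ffi)+\widehat{r}\ffi\big\|_1\le\frac{t}{2}\|\widehat{r^2}\ffi\|_1$, the right-hand side being finite by Assumption \ref{assum_smoothnessOfCoef} (iv). Also watch the sign: since $\mathcal{F}[\mu_t]$ is proportional to $e^{-tr}$, the limit of $t^{-1}(\mu_t*\ffi-\ffi)$ is $-\widehat{r}\ffi$ rather than $+\widehat{r}(D)\ffi$, so that the two contributions sum to $-\widehat{H}_1(\cdot,D)\ffi$ as the lemma requires. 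Your continuity argument at $t_0$, via $\|(\mu_t-\mu_{t_0})*\ffi\|_1\to0$ together with the three-$\varepsilon$ extension of Lemma \ref{Lemma:H:str-cont+deriv-at-0} to all of $L_1(\cR^d)$ applied to $(G^1_{A,b,c}(t)-G^1_{A,b,c}(t_0))(\mu_{t_0}*\ffi)$, is sound.
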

\begin{proof}
Fix $t_0\ge0$ and let $[0,t_0+1]\ni t\to t_0$. By Taylor's formula with $\theta$ in between $t$ and $t_0$, by the Fubini--Tonelli theorem, by Lemma \ref{lemma:order of PDO} and Lemma \ref{Lemma_tau-norm estim} with a probability measure $\mu_\theta=(2\pi)^{-d/2}\mathcal{F}^{-1}[e^{-\theta r(p)}]$,   for each $\ffi\in C^\infty_c(\cR^d)$  we have
\begin{align}\label{estimate:H-strCont}
&\|F_1(t)\ffi-F_1(t_0)\ffi\|_1=\\
&
=\left\|  \frac{t-t_0}{(2\pi)^d}\intl_{\cR^d}\intl_{\cR^d}e^{ip\cdot (q-q_1)} H( q,p)e^{-{\theta H(q,p)}}\ffi(q_1)dq_1dp  \right\|_1=\nonumber\\
&=|t-t_0|\left\|  \left(\widehat{He^{-{\theta H}}}\right)_1(\cdot,D)\ffi  \right\|_1\le\nonumber\\
&
\le|t-t_0|\left[\left\|\left(\widehat{he^{-\theta H}}\right)_1(\cdot,D)\ffi  \right\|_1+\left\|\left(\widehat{re^{-\theta H}}\right)_1(\cdot,D)\ffi  \right\|_1\right]=\nonumber\\
&
=|t-t_0|\left[\left\|\left(\widehat{he^{-\theta h}}\right)_1(\cdot,D)\circ \left(\widehat{e^{-\theta r}}\right)\ffi  \right\|_1+\left\|\left(\widehat{e^{-\theta H}}\right)_1(\cdot,D)\circ \widehat{r}\,\ffi  \right\|_1\right]\le\nonumber\\
&
\le |t-t_0|\left[\left\|\left(\widehat{he^{-\theta h}}\right)_1(\cdot,D)(\mu_\theta*\ffi)  \right\|_1+\|F_1(\theta)\|\,\| \widehat{r}\,\ffi  \|_1\right]\le\nonumber\\
&
\le|t-t_0|\left[ \suml_{k=0}^2 C_k(t_0)\|(\mu_\theta*\ffi)^{(k)}\|_1      + e^{k\theta}\| \widehat{r}\,\ffi  \|_1\right]\le\nonumber\\
&
\le|t-t_0|\left[ \suml_{k=0}^2 C_k(t_0)\mu_\theta(\cR^d)\|\ffi^{(k)}\|_1      + e^{k\theta}\| \widehat{r}\,\ffi  \|_1\right]=\nonumber\\
&
=|t-t_0| K(t_0,\ffi)
\end{align}
with some constants  $C_k(t_0)$  depending only on $t_0$ and $K(t_0,\ffi)$ depending only on $t_0$ and $\ffi$.  These constants $C_k(t_0)$ arise from the calculations with  the operator  $\left(\widehat{he^{-\theta h}}\right)_1(\cdot,D)$ obtained in the Lemma \ref{Lemma:H:str-cont+deriv-at-0}. Note, that all calculations in Lemma \ref{Lemma:H:str-cont+deriv-at-0} remain true for any $\ffi\in S(\cR^d)$. Moreover, by Assumption \ref{assum_smoothnessOfCoef} (iv) we have $r(\cdot)\in C^\infty(\cR^d)$ and, as a negative definite function, $r(\cdot)$ grows at infinity with all its derivatives not faster than a polynomial (cf. Lemma 3.6.22 and Theo.3.7.13 in \cite{NJ}). Therefore, $\widehat{r^m}\ffi$,  $\left(\widehat{r^me^{-\theta r}}\right)\ffi\in S(\cR^d)$ for any $\ffi\in C_c^\infty(\cR^d)\subset S(\cR^d)$ and any $m\in \mathbb{N}\cup\{0\}$. In the same way by Lemma \ref{lemma:order of PDO} and Lemma \ref{Lemma:H:str-cont+deriv-at-0} with $[0,1]\ni t\to 0$ and   $\theta\in(0,t)$ we obtain
\begin{align*}
&\left\|\frac{F_1(t)\ffi-\ffi}{t}+ \widehat{H}_1(\cdot,D)\ffi \right\|_1=t\left\|\left(\widehat{ H^2e^{-\theta H} } \right)_1(\cdot,D)\ffi \right\|_1\le\\
&
\le t\left\|\left(\widehat{ h^2e^{-\theta h} } \right)_1(\cdot,D)\circ \left(\widehat{e^{-\theta r}}\right)\ffi+2 \left(\widehat{ he^{-\theta h} } \right)_1(\cdot,D)\circ \left(\widehat{re^{-\theta r}}\right)\ffi+\left(\widehat{ e^{-\theta h} } \right)_1(\cdot,D)\circ \left(\widehat{r^2e^{-\theta r}}\right)\ffi\right\|_1=\\
&
=t\left\|\left(\widehat{ h^2e^{-\theta h} } \right)_1(\cdot,D)(\mu_\theta*\ffi)+2 \left(\widehat{ he^{-\theta h} } \right)_1(\cdot,D)(\mu_\theta*[\widehat{r}\ffi])+F_1(\theta)(\mu_\theta*[\widehat{r^2}\ffi])\right\|_1\le\\
&
\le  t\mu_\theta(\cR^d)\left[\suml_{k=0}^4 C_k \|\ffi^{(k)}\|_1+ 2\suml_{k=0}^2 C'_k \|(\widehat{r}\ffi)^{(k)}\|_1       +e^{k\theta}\left\|\widehat{(r^2)}\ffi\right\|_1\right]\le\\
&
=t K'(\ffi).
\end{align*}

\end{proof}

\begin{theorem}
Let $X=L_1(\cR^d)$. Let $\tau=1$.
 Under Assumptions \ref{assum_smoothnessOfCoef} (i), (iii), (iv) and Assumption  \ref{assum_generate} the family $(F_\tau(t))_{t\ge0}$ given by the formula~\eqref{F0} is Chernoff equivalent to the semigroup $(T^\tau_t)_{t\ge0}$, generated by the closure $(L^\tau,\D(L^\tau))$ of a $\psi$DO $(\widehat{H}_\tau(\cdot,D),C^\infty_c(\cR^d))$ with the $\tau$-symbol $H(q,p)$ as in \eqref{def:H}. Therefore, the  Feynman formula
 $$
 (T^\tau_t)\ffi=\lim_{n\to\infty}(F_\tau(t/n))^n\ffi
 $$   holds in $X=L_1(\cR^d)$ locally uniformly with respect to  $t$.
The  obtained Feynman formula converts also into a Lagrangian Feynman formula
\begin{align}\label{formula:LFF-1}
&(T^\tau_t)\ffi(q_0)=\\
&
=\lim_{n\to\infty}\intl_{\cR^{2nd}}\exp\left\{-\suml_{k=1}^n \frac{A^{-1}(q_{k-1})(q_{k}-q_{k-1}+z_k-b(q_{k-1})t/n)\cdot(q_{k}-q_{k-1}+z_k-b(q_{k-1})t/n)}{4t/n}\right\}\times\nonumber\\
&
\times\prodl_{k=1}^n \left((2\pi t/n)^d\det A(q_{k-1})\right)^{-1/2} \exp\left\{-\frac{t}{n}\suml_{k=1}^n c(q_{k-1}) \right\}\ffi(q_n)dq_1\,\mu_{t/n}(dz_1)\ldots dq_n\,\mu_{t/n}(dz_n)\nonumber
\end{align}
and with $\ffi\in S(\cR^d)$ into a Hamiltonian Feynman formula
\begin{align}\label{HFF-1}
&(T^\tau_t)\ffi(q_0)=\\
&
=\lim_{n\to\infty}\frac{1}{(2\pi)^d}\intl_{\cR^{2nd}}\exp\left\{i\suml_{k=1}^n p_k\cdot (q_{k+1}-q_k)\right\} \exp\left\{-\frac{t}{n}\suml_{k=1}^n H(q_{k+1},p_k)\right\}\ffi(q_1)dq_1dp_1\ldots dq_ndp_n,\nonumber
\end{align}
where  $q_{n+1}:=q_0$  in the pre-limit expressions for each $n\in \mathbb{N}$.
\end{theorem}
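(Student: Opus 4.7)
The plan is to apply the Chernoff theorem (Theorem \ref{Chernoff}) to the family $(F_1(t))_{t\ge 0}$, exactly as was done for Theorem \ref{Theo:FF-tau}, and then unfold the resulting abstract Feynman formula into its two concrete finite-dimensional realizations.

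First, I would verify the four Chernoff hypotheses. Condition $F_1(0)=\id$ is immediate from \eqref{F0}. The norm bound $\|F_1(t)\|\le e^{kt}$ follows from Lemma \ref{Lemma_tau-norm estim}: its hypothesis \ref{assum_smoothnessOfCoef} (ii) is weaker than the now-assumed (iii), so the estimate transfers verbatim. Strong continuity of $t\mapsto F_1(t)\ffi$ on $C_c^\infty(\cR^d)$ and the identification of $F_1'(0)\ffi=-\widehat{H}_1(\cdot,D)\ffi$ are precisely the two statements of the lemma immediately preceding this theorem. Assumption \ref{assum_generate} asserts that $C_c^\infty(\cR^d)$ is a core, so the restriction of $F_1'(0)$ to this subspace is closable with closure $-L^1$, and $L^1$ generates $(T^1_t)_{t\ge 0}$. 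All hypotheses of Theorem \ref{Chernoff} are then satisfied and we obtain $T^1_t\ffi=\liml_{n\to\infty}(F_1(t/n))^n\ffi$ in $L_1(\cR^d)$ locally uniformly in $t$.

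For the Lagrangian formula \eqref{formula:LFF-1}, I would invoke Remark \ref{rem:LagrangianFF} with $\tau=1$, which gives
\[
F_1(t)\ffi(q)=\intl_{\cR^d}\ffi(q_1)\,\bigl[g^{q}_t*\mu_t\bigr](q-q_1)\,dq_1,
\]
where $g^{q}_t$ is the explicit Gaussian density \eqref{gausDensity} and $\mu_t$ is the law at time $t$ of the L\'evy process with symbol $r$. Iterating this $n$ times, replacing each convolution by an explicit $\mu_{t/n}$-integration in an auxiliary variable $z_k$, and inserting the closed form of $g^{q_{k-1}}_{t/n}$ from \eqref{gausDensity} delivers precisely the expression in \eqref{formula:LFF-1}. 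Fubini's theorem applies at each step thanks to the integrability bounds already exploited in Lemma \ref{Lemma_tau-norm estim}.

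For the Hamiltonian formula \eqref{HFF-1}, I would directly iterate the defining formula \eqref{F0} with $\tau=1$, so that each application contributes one pair $(q_k,p_k)$ of integration variables and one factor $\exp\{ip_k\cdot(q_{k+1}-q_k)-\tfrac{t}{n}H(q_{k+1},p_k)\}$. The cyclic identification $q_{n+1}:=q_0$ is forced by renaming the outermost variable as $q_0$ after $n$ iterations. The restriction $\ffi\in S(\cR^d)$ is needed for absolute convergence of the mixed oscillatory–exponential integrals and for justifying the rearrangement of orders of integration. I do not expect a genuine obstacle in this theorem: the technical core—strong continuity and the derivative at zero for the full L\'evy–Khintchine symbol, which is the delicate point because $r(p)$ need not be polynomial—was already absorbed into the preceding lemma via Lemma \ref{lemma:order of PDO} and the convolution trick $\mu_\theta*\ffi$. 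What remains here is purely the bookkeeping that converts the Chernoff limit into the two stated iterated-integral representations.
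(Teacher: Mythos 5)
Your proposal is correct and follows essentially the same route as the paper: Chernoff's theorem combined with the norm estimate of Lemma \ref{Lemma_tau-norm estim} and the strong continuity/derivative-at-zero statements of the lemma immediately preceding the theorem, then unfolding the abstract formula via the kernel representation \eqref{formula:LagrangianFamily} for the Lagrangian version and by iterating \eqref{F0} for the Hamiltonian one. The only point the paper makes explicit that you leave implicit is that iterating \eqref{F0} on $S(\cR^d)$ is legitimate because $F_1(t)$ maps $S(\cR^d)$ into itself (Lemma 3.3 of \cite{BSchS-IDAQPRT}), not merely because the first integral converges absolutely.
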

\begin{proof}
The statement of the Theorem is a straightforward consequence of Chernoff Theorem~\ref{Chernoff}, Lemma~\ref{Lemma_tau-norm estim} and Lemma~\ref{Lemma:H:str-cont+deriv-at-0}.  Lagrangian Feynman formula is obtained with the help of representation \eqref{formula:LagrangianFamily}. Under Assumptions \ref{assum_smoothnessOfCoef} (i), (iii), (iv) we have $F_\tau(t):S(\cR^d)\to S(\cR^d)$ (due to Lemma 3.3 in \cite{BSchS-IDAQPRT}). Therefore, all expressions in the right hand side of the Hamiltonian Feynman formula are well defined.
\end{proof}

\section{The construction of phase space Feynman path integrals via a family of Hamiltonian Feynman pseudomeasures for $\tau\in[0,1]$.}
A Feynman pseudomeasure on a (usually infinite dimensional) vector space is a  continuous linear functional on a locally convex space of some functions defined on this vector space.  The value of this functional on a function belonging to  its domain  is called Feynman integral with respect to this Feynman pseudomeasure. If the considered vector space  is itself a set of functions taking values in classical configuration or phase space then the corresponding Feynman integral is called configuration or phase space Feynman path integral.

There are many approaches for giving a mathematically rigorous meaning  to  phase space Feynman path integrals.  Some phase space Feynman path integrals are defined via the Fourier transform  and    via Parseval's equality (see \cite{Smo-Shavg},  cf. \cite{AlH-KMa08}; see \cite{SSham},  \cite{DWMMN79}, \cite{CDWM06} and references therein); some are defined via an analytic continuation of a Gaussian measure on the set of paths in a phase space \cite{Smo-Shavg},  some ---  via   regularization procedures, e.g., as  limits of integrals with respect to Gaussian measures with a diverging diffusion constant \cite{DauKl85};   the integrands of some phase space Feynman path  integrals are realized as Hida distributions in the setting of White Noise Ana\-lysis  \cite{BG11}.
     A variety of approaches  treats Feynman path integrals as  limits of  integrals over some  finite dimensional subspaces of paths when the dimension tends to infinity. Such path integrals are sometimes called sequential and are most convenient for direct calculations.  The general definition of a sequential Feynman pseudomeasure (Feynman path integral) in an abstract space (on a set of paths in a phase space, in particular)  can be found in \cite{Smo-Shavg}. Some concrete realizations  are e.g.\ presented in  \cite{STT}, \cite{BS},  \cite{AGM03}, \cite{Ich00}, \cite{KgFjw08}, \cite{Kg96}, \cite{KiKg81},   \cite{Gar66}.

One of the most convenient  definitions  of a Feynman pseudomeasure on a  conceptual level is via its Fourier transform.
Let $X$ be a locally convex space and $X^*$ be the set of all continuous linear functionals on $X$. Let $E$ be a real vector space and for all
$x\in E$ and any linear functional   $g$ on  $E$  let
$\phi_{g}(x)=e^{ig(x)}$. Let  $F_{E}$  be a locally convex set of some complex valued functions on   $E$. Elements of the set
  $F(E)^*$ are called \emph{ $F(E)^*$-distributions on  $E$} or just \emph{distributions on  $E$} (if we don't specify the space   $F(E)^*$ exactly).
Let   $G$  be a vector space of some linear functionals on   $E$ distinguishing elements of  $E$ and let
$\phi_{g}\in F_{E}$ for all   $g\in G$.  Then \emph{  $G$-Fourier transform} of an element
 $\eta\in F_{E}^{*}$ is a function  on   $G$  denoted by
 $\widetilde\eta$ or $\mathcal F[\eta]$ and defined by the formula $$\widetilde\eta(g)\equiv\mathcal F[\eta](g):=\eta(\phi_{g}).$$
If a set   $\{\phi_{g}:g\in G\}$ is total in $F_{E}$ (i.e.  its linear span is dense in  $F_{E}$) then any element    $\eta$ is uniquely defined by its Fourier transform.

\begin{definition}
 Let   $b$ be a quadratic functional on   $G$,
$a\in E$ and  $\alpha\in \mathbb C$. Then  Feynman $\alpha$-pseudomeasure on  $E$ with correlation functional  $b$ and mean  $a$  is a distribution
 $\Phi_{b,a,\alpha}$ on  $E$ whose Fourier transform is given by the formula   $$\mathcal F[\Phi_{b,a,\alpha}](g)=\exp\bigg\{\frac{\alpha
b(g)}{2}+ig(a)\bigg\}.$$
\end{definition}
If   $\alpha=-1$  and $b(x)\ge0$ for all $x\in G$ then  Feynman
$\alpha$-pseudomeasure is a Gaussian $G$-cylindrical measure on   $E$ (which however can be not $\sigma-$additive). If  $\alpha=i$ then we have a ``standard'' Feynman pseudomeasure which is usually used for solving Schr\"{o}dinger type equations.  In the sequel we will consider only these ``standard'' Feynman   $i-$pseudomeasures with   $a=0$.

\begin{definition}[Hamiltonian (or phase space) Feynman pseudomeasure]
Let   $E=Q\times P$, where  $Q$ and  $P$ are locally convex spaces,  $Q=P^{*}$, $P=Q^{*}$ (as vector spaces) with the duality $\langle\cdot,\cdot\rangle$;
the space   $G=P\times Q$ is identified with the space of all linear functionals on $E$ in the following way: for any  $g=(p_{g},q_{g})\in G$ and $x=(q,p)\in
E$ let $g(x)=\langle q,p_g\rangle +\langle q_g,p\rangle$.  Then   \textbf{Hamiltonian (or symplectic, or phase space) Feynman pseudomeasure} on   $E$  is a Feynman $i$-pseudomeasure $\Phi$ on   $E$ whose correlation functional   $b$ is given by the formula  $b(p_{g},q_{g})=2\langle q_g, p_g\rangle$ and mean $a=0$, i.e. $$\mathcal F[\Phi](g)=\exp\big\{i\langle q_g, p_g\rangle)\big\}.$$
\end{definition}

\begin{definition}
Assume that there exists a linear injective mapping  $B: G\to E$ such that   $b(g)=g(B(g))$ for all  $g\in G$ ($B$ is called \emph{correlation operator} of Feynman pseudomeasure). Let   $\D(B^{-1})$ be the domain of   $B^{-1}$. A function $\D(B^{-1})\ni x\mapsto
f(x)=e^{\frac{\alpha^{-1}B^{-1}(x)(x)}{2}}$ is called the \textbf{generalized density} of Feynman $\alpha-$pseudomeasure  (cf. \cite{SWW1}).
\end{definition}

\begin{example}
\noindent(i) If  $E=\cR^d=G$  then  the Feynman $i$-pseudomeasure   on  $E$ with correlation operator
 $B$ can be identified with a complex-valued measure (with unbounded variation) on a
$\delta$-ring of bounded Borel subsets of    $\cR^d$ whose density with respect to the Lebesgue measure is
 $f(x)=e^{-\frac{i}{2}(B^{-1}x,x)}$. In this case the generalized density coincides with the density in usual sense.

\noindent (ii) If we consider the Hamiltonian Feynman pseudomeasure on $E=Q\times P$ then take $B: (p,q)\in G\subset E^*\to (q,p)\in E$.  Then  we have $g(B(g))=g(B(p_g,q_g))=g(q_g,p_g)=2\langle q_g,p_g\rangle=b(g)$. Moreover, $B^{-1}:E\to E^*$ is defined by the formula $B^{-1}(q,p)=(p,q)$ and hence the generalized density of the Hamiltonian Feynman pseudomeasure is given by the formula $f(q,p)=\exp\{i\langle q,p\rangle\}$.
\end{example}

The concepts  given above allow to introduce the following definition of a Feynman pseudomeasure in the frame of sequential approach (in the sequel we assume  any standard regularization of oscillating  integrals, e.g.,
 $\intl_{E}f(z)dz=\liml_{\varepsilon\to0}\intl_{E}f(z)e^{-\varepsilon|z|^2}dz$).

\begin{definition}[Sequential Feynman pseudomeasure]\label{Def-sequential} Let  $\{E_n\}_{ n\in
\mathbb{N}}$ be an increasing sequence of finite dimensional subsets of   $\D(B^{-1})$. Then the value of a sequential Feynman
$\alpha-$pseudomeasure  $\Phi^{\{E_n\}}_{B,\alpha}$ (with mean $a=0$) associated with the sequence
 $\{E_n\}_{ n\in \mathbb{N}}$ on a function  $\psi:E\to\mathbb{C}$ (this value is called sequential  Feynman integral of  $\psi$) is defined by the formula
$$\Phi^{\{E_n\}}_{B,\alpha}(\psi)= \liml_{n\to\infty}\bigg(\intl_{E_n}e^{\frac{\alpha^{-1}B^{-1}(x)(x)}{2}}dx\bigg)^{-1}
\intl_{E_n}\psi(x)e^{\frac{\alpha^{-1}B^{-1}(x)(x)}{2}}dx,$$ where one integrates with respect to the Lebesgue measure on $E_n$, if the limit in the r.h.s. exists.
\end{definition}
The fact that a function $\psi$ belongs to the domain of the functional $\Phi^{\{E_n\}}$ depends only on restrictions of this function to the subspaces $E_n$.
In the particular case of Hamiltonian Feynman pseudomeasure Definition \ref{Def-sequential} can be read as follows:

\begin{definition}\label{HFPI1}
 Let $\{E_n=Q_n\times P_n\}_{n\in \mathds{N}}$ be an
increasing sequence of finite dimensional vector subspaces of
$E=Q\times P$, where $Q_n$ and $P_n$ are vector subspaces of $Q$ and
$P$ respectively. The value $\Phi_{\{E_n\}}(G)$ of the Hamiltonian Feynman
pseudomeasure  $\Phi_{\{E_n\}}$, associated with the sequence
$\{E_n\}_{n\in \mathds{N}}$, on a function $\psi:E\to\comp$, i.e.\  a Feynman path integral of $\psi$, is defined by the formula
\begin{equation}
    \Phi_{\{E_n\}}(\psi)
    =\liml_{n\to\infty}\Big( \intl_{E_n}e^{i\langle p,q \rangle}\,dq\,dp \Big)^{-1}\intl_{E_n}\psi(q,p)e^{i\langle p, q\rangle}\,dq\,dp,
\end{equation}
if this limit exists. In this formula (as well as before) all integrals must be considered in a suitably regularized sense.
\end{definition}


\bigskip

In the sequel we present a construction of the Hamiltonian
Feynman pseudomeasure for a particular family of spaces $E^{x,\tau}_t$ with $\tau\in[0,1]$, cf.\ \cite{STT}, \cite{BBSchS}. For any $t>0$ let $PC([0,t],
\real^d)$ be the vector space of all functions on $[0,t]$ taking
values in $\real^d$ whose distributional derivatives are measures with
finite support. Let $PC^l([0,t], \real^d)$ denote the space of all
left continuous functions from $PC([0,t], \real^d)$. 
 Let $PC^\tau([0,t], \real^d)$ be the collection of functions $f$ from $PC([0,t],\cR^d)$ such that  for all $s\in(0,t)$
\begin{equation}\label{tau-continuity}
f(s)=\tau f(s+0)+(1-\tau)f(s-0).
\end{equation}

For each $x\in\real^d$ let
$$
{Q}_t^{x,\tau}=\{ f\in
PC^\tau([0,t], \real^d):\,\, f(0)=\liml_{t\to+0}f(t),\,\, f(t)=x\},
$$
$${P}_t=\{ f\in PC^l([0,t], \real^d):\, f(0)=\liml_{t\to+0} f(t) \}
$$
 and
$E_t^{x,\tau}={Q}^{x,\tau}_t\times {P}_t$. The spaces ${Q}^{x,\tau}_t$ and ${P}_t$ are
taken in duality by the form: $$\langle
q(\cdot),p(\cdot)\rangle\mapsto\intl_0^t p(s)\dot{q}(s)\,ds,$$ where
$\dot{q}(s)\,ds$ denotes the measure which is the distributional derivative
of $q(\cdot)$. We will consider the elements of $E_t^{x,\tau} $ as
functions taking values in ${\mathbf{E}}=\mathbf{Q}\times
\mathbf{P}=\real^d\times\real^d$.

Let $t_0=0$  and for any $n\in \mathds{N}$  and any $k\in
\mathds{N}$, $k\le n$,  let $t_k=\frac{k}{n}t$.
 Let $F_n\subset PC([0,t],\real^d)$  be the
space of functions, the  restrictions of which to any interval
$(t_{k-1},t_k)$ are constant functions.
Let  $Q^\tau_n=F_n\cap
Q^{x,\tau}_t$, $P_n=F_n\cap P_t$. Let $J^\tau_n$ be
the mapping of $E^\tau_n=Q^\tau_n\times P_n$ to $(\real^d\times\real^d)^n$,
defined by
\begin{align*}
    J^\tau_n(q,p)&=\big(q(\tfrac{t}{n}-0),p(\tfrac{t}{n}),\ldots,q(\tfrac{(n-1)t}{n}-0),p(\tfrac{(n-1)t}{n}),q(\tfrac{nt}{n}-0),p(\tfrac{nt}{n} )\big)\equiv\\
    &\equiv\big(q_1,p_1,.....,q_n,p_n\big).
\end{align*}

The map $J^\tau_n$ is a one-to-one correspondence of $E^\tau_n$ and
$(\real^d\times\real^d)^n$. Therefore,  in this particular case Definition \ref{HFPI1} can be rewritten in the following way:
\begin{definition}\label{DefHFPI} The Hamiltonian (or phase space) Feynman path integral
$$
    \Phi^\tau_x(\psi)\equiv\intl_{E^{x,\tau}_t}\psi(q,p)\Phi^\tau_x(dq,dp)
    \equiv
    \intl_{E^{x,\tau}_t} \psi(q(s),p(s))e^{i\intl_0^t p(s) \dot{q}(s)ds}\prodl_{\tau=0}^t\,dq(s)\,dp(s)
$$
of a function $\psi:Q^{x,\tau}_t\times P_t\to\real$ is defined as a limit:
\begin{equation}\begin{aligned}\label{formula:FPIapprox}
    \Phi^\tau_x(\psi)
    &=\liml_{n\to\infty}\frac{1}{(2\pi)^{dn}}\intl_{(\real^d\times\real^d)^n} \psi((J^\tau_n)^{-1}(q_1,p_1,.....,q_n,p_n)) \times\\ &\qquad\qquad\times\exp\left[i\suml_{k=1}^n p_k\cdot(q_{k+1}-q_{k})\right]
    dq_1\,dp_1.......dq_n\,dp_n,
\end{aligned}
\end{equation}
where $q_{n+1}:=x$ in each pre-limit expression.
\end{definition}
\begin{remark}
The  generalized
density of the pseudomeasure $\Phi^\tau_x$ can be defined through the formula
$$
    \intl_{E^{x,\tau}_t}\psi(q(s),p(s))\Phi^\tau_x(dq\,dp)
    =\liml_{n\to\infty} C_n \intl_{Q^\tau_n\times P_n} \psi(q(s),p(s))\,\exp\left[i\intl_0^t p(s) \dot{q}(s)\,ds\right]\nu_n(dq)\,\nu_n(dp),
$$
where  $(C_n)^{-1}=\int_{Q^\tau_n\times P_n} \exp\left[i\int_0^t p(s)\dot{q}(s)\,ds\right]\nu_n(dq)\,\nu_n(dp)$ and $\nu_n$ is Lebesgue measure.
\end{remark}
\begin{remark}
The construction described above is a modification and an extension  of  constructions introduced  in \cite{STT},   \cite{OST}, \cite{BS} and \cite{BBSchS}.
\end{remark}

\section{Phase space  Feynman path integrals for evolution semigroups generated by $\tau$-quatization of some L\'{e}vy-Khintchine type Hamilton functions for $\tau\in[0,1]$}

In this section we show that Hamiltonian Feynman formulae obtained above can be interpreted as some phase space Feynman path integrals. Therefore, the corresponding phase space Feynman path integrals do exist and coincide  with some functional integrals with respect to countably additive (mainly probability) measures  associated with some  Feller type semigroups.

\begin{theorem}\label{Theo:FPI-tau} Let  $\tau\in[0,1]$ and $H(q,p)=h(q,p)$, where $h(q,p)$ is given by the formula \eqref{def:h}.
 Let  Assumption \ref{assum_smoothnessOfCoef} (i), (iii) and Assumption \ref{assum_generate} fulfil. Let $(T^\tau_t)_{t\ge0}$ be the semigroup generated by the closure $(L^\tau,\D(L^\tau))$ of a $\psi$DO $(\widehat{H}_\tau(\cdot,D),C^\infty_c(\cR^d))$ with the $\tau$-symbol $H(q,p)$. Then
the Hamiltonian Feynman formula  \eqref{hff-tau} can be interpreted as  a phase space Feynman path integral
\begin{align}\label{formula:FPI-tau}
T^\tau_t\ffi(x)=\intl_{E^{x,\tau}_t}e^{-\intl_0^t H(q(s),p(s))ds}\ffi(q(0))\Phi^\tau_x(dqdp).
\end{align}
\end{theorem}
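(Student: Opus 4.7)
The plan is to reduce the right-hand side of \eqref{formula:FPI-tau} to the pre-limit expressions appearing in the Hamiltonian Feynman formula \eqref{hff-tau}, which by the preceding theorem is already known to converge to $T^\tau_t\ffi(x)$. The theorem is thus essentially an identification of two discretization schemes: the one defining the sequential phase space Feynman pseudomeasure $\Phi^\tau_x$ and the one arising from Chernoff-type approximation of $(T^\tau_t)_{t\ge0}$.

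First, I would apply Definition \ref{DefHFPI} to the integrand $\psi(q,p)=e^{-\intl_0^t H(q(s),p(s))ds}\ffi(q(0))$. This rewrites the right-hand side of \eqref{formula:FPI-tau} as
\begin{equation*}
\liml_{n\to\infty}\frac{1}{(2\pi)^{nd}}\intl_{\real^{2nd}}\psi\bigl((J^\tau_n)^{-1}(q_1,p_1,\ldots,q_n,p_n)\bigr)\exp\Bigl[i\suml_{k=1}^n p_k\cdot(q_{k+1}-q_k)\Bigr]dq_1dp_1\ldots dq_ndp_n,
\end{equation*}
with $q_{n+1}:=x$. The remaining task is therefore to evaluate $\psi\circ(J^\tau_n)^{-1}$ and check that the resulting pre-limit expression matches \eqref{hff-tau} exactly.

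Next, I would describe the step path $(q(\cdot),p(\cdot))=(J^\tau_n)^{-1}(q_1,p_1,\ldots,q_n,p_n)\in E^\tau_n$ explicitly: $q$ is constant equal to $q_k$ on $(t_{k-1},t_k)$ and $p$ is constant equal to $p_k$ on $(t_{k-1},t_k]$; the conditions in the definitions of $Q^{x,\tau}_t$ and $P_t$ force $q(0)=q_1$ and $q(t)=x=q_{n+1}$; the $\tau$-continuity condition \eqref{tau-continuity} fixes $q(t_k)=\tau q_{k+1}+(1-\tau)q_k$ for $k=1,\ldots,n-1$. From these identifications, $\ffi(q(0))=\ffi(q_1)$, while the pathwise integral $\intl_0^t H(q(s),p(s))ds$, interpreted for these step paths as the natural Riemann sum over the partition $\{t_k\}$ with evaluations at the partition points (which is where the $\tau$-continuity encodes the quantization choice), equals $\frac{t}{n}\suml_{k=1}^n H(\tau q_{k+1}+(1-\tau)q_k,p_k)$. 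Substituting these expressions into the formula above produces precisely the pre-limit integrand in \eqref{hff-tau}, and the theorem of the previous section supplies the convergence.

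The only conceptually delicate point is the third step: giving a well-defined meaning to $\intl_0^t H(q(s),p(s))ds$ on the discontinuous paths of $E^\tau_n$ in such a way that the $\tau$-continuity condition \eqref{tau-continuity} genuinely produces the weight $\tau q_{k+1}+(1-\tau)q_k$. The convention forced by the calculation is that this integral is defined pathwise, for piecewise constant paths, as the Riemann sum at partition endpoints; under this convention the value $q(t_k)=\tau q_{k+1}+(1-\tau)q_k$ read off from \eqref{tau-continuity} is exactly what enters the Feynman integrand. Once this interpretational point is granted, the rest is direct bookkeeping against Definition \ref{DefHFPI} and the Hamiltonian Feynman formula \eqref{hff-tau}.
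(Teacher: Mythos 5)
Your proposal is correct and follows essentially the same route as the paper's own proof: both identify the pre-limit expressions of Definition \ref{DefHFPI} applied to $\psi(q,p)=e^{-\int_0^t H(q(s),p(s))ds}\ffi(q(0))$ with those of the Hamiltonian Feynman formula \eqref{hff-tau}, using the $\tau$-continuity condition \eqref{tau-continuity} to read off $q(t_k)=\tau q_{k+1}+(1-\tau)q_k$ and interpreting the pathwise integral as the Riemann sum $\sum_{k}H(q(t_k),p(t_k))(t_k-t_{k-1})$ over the partition points. Your explicit flagging of the convention needed to make $\int_0^t H(q(s),p(s))ds$ well defined on the step paths is a point the paper treats only implicitly, but it is the same argument.
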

\begin{proof} Indeed, using Definition \ref{DefHFPI} we get
\begin{equation*}
\begin{aligned}
    (T^\tau_t\varphi)(x)
    &
    =\lim\limits_{n\rightarrow\infty}(2\pi)^{-dn}\int\limits_{(\mathds{R}^d)^{2n}} \exp\left(i\sum\limits_{k=1}^n p_k\cdot(q_{k+1}-q_{k})\right)\times\\
    &
    \qquad\times \exp\left(-\frac{t}{n}\sum\limits_{k=1}^n H(\tau q_{k+1}+(1-\tau)q_k,p_k)\right)\varphi(q_1)\,dq_1\,dp_1\cdots dq_n\,dp_n=\\
    &
    =\intl_{E^{x,\tau}_t}e^{-\intl_0^t H(q(s),p(s))ds}\ffi(q(0))\Phi^\tau_x(dqdp),
\end{aligned}
\end{equation*}
where,  in each pre-limit expression in the Hamilton formula, we have $q_{n+1}:=x$; moreover, $q_1=q(t/n-0)\to q(0)$ as  $n\to\infty$ due to the definition of the space  ${Q}_t^{x,\tau}$ and
 $$\frac{t}{n}\sum\limits_{k=1}^nH(\tau q_{k+1}+(1-\tau)q_k,p_k)=\sum\limits_{k=1}^n H( q(t_k),p(t_k))(t_k-t_{k-1})\to \intl_0^t H(q(s),p(s))ds$$
since  any path $(q(s),p(s))\in E^{x,\tau}_t$ is piecewise continuous and has a finite number of jumps, $H$ is a continuous function.
\end{proof}
\begin{remark}
Note, that the integrand in the Feynman path integral \eqref{formula:FPI-tau} is the same for all $\tau\in[0,1]$, only the space $E^{x,\tau}_t$, defining the sequential pseudomeasure $\Phi^\tau_x$ is different; this space contains those paths $q(s)$ which are ``$\tau$-continuous'' (see the formula \eqref{tau-continuity} for the definition).
\end{remark}
\begin{remark} Under Assumptions  \ref{assum_smoothnessOfCoef} (i), (ii), Assumption \ref{assum_generate} and due to the formula \eqref{formula:tau-to-1}  (i.e. Lemma 2.1 in \cite{BBSchS})  we see that $\widehat{H}_\tau(q,D)\ffi(q)=\widehat{H}^\tau_1(q,D)\ffi(q)$, where $\widehat{H}^\tau_1(q,D)\ffi(q)$ is
 a pseudo-differential operator  with 1-symbol
$$H^\tau(q,p)=c_\tau(q)+ ib_\tau(q)\cdot p  +p\cdot A(q)p $$ and we have
 $$b_\tau(q)=b(q)-2(1-\tau)\Div A(q),$$
 $$c_\tau(q)=c(q)+(1-\tau)\Div b(q)-(1-\tau)^2\tr(\Hess A(q)).$$
Therefore, due to Theorem \ref{Theo:FF-tau}   and Theorem \ref{Theo:FPI-tau}  there is a kind of ``change of variable formula'' for $H(q,p)=c(q)+ib(q)\cdot p+p\cdot A(q)p $:
\begin{align*}
T^\tau_t\varphi(x) &=\intl_{E^{x,1}_t}\exp\left[-\intl_0^t H^\tau(q(s),p(s)) ds\right]\varphi(q(0))\,\Phi^1_x(dq\,dp)=\\
    &=\intl_{E^{x,\tau}_t}\exp\bigg[-\intl_0^t H(q(s),p(s))ds\bigg]\ffi(q(0))\Phi^\tau_x(dqdp).
\end{align*}
i.e.,
\begin{align*}
&T^\tau_t\varphi(x)=\\
    &=\intl_{E^x_t}\exp\left[-\intl_0^t p(s)\cdot A(q(s))p(s)\,ds\right] \exp\left[-i\intl_0^t\Big[(b(q(s))-2(1-\tau)\Div A(q(s)))\cdot p(s) \Big] ds\right]\times\\
    &\quad\times \exp\left[-\intl_0^t \Big[c(q(s))+(1-\tau)\Div b(q(s))-(1-\tau)^2\tr(\Hess A(q(s)))\Big]ds\right] \varphi(q(0))\Phi^1_x(dq\,dp)=\\
    &=\intl_{E^{x,\tau}_t}\exp\bigg[-\intl_0^t p(s)\cdot A(q(s))p(s)\,ds-i\intl_0^t b(q(s))\cdot p(s) \,ds-\intl_0^t c(q(s))ds\bigg]\ffi(q(0))\Phi^\tau_x(dqdp).
\end{align*}

\end{remark}

Due  to Definition \ref{DefHFPI} the
Hamiltonian Feynman formula \eqref{HFF-1}  can be
interpreted as a Hamiltonian Feynman path integral with respect to
the Feynman pseudomeasure $\Phi^1_x$. Therefore the following theorem is true (cf. \cite{BBSchS}).
\begin{theorem}
\label{Th-FPI}
Let $\tau=1$.  Under Assumptions \ref{assum_smoothnessOfCoef} (i), (iii), (iv) and Assumption  \ref{assum_generate} the  semigroup $(T^\tau_t)_{t\ge0}$, generated by the closure $(L^\tau,\D(L^\tau))$ of a $\psi$DO $(\widehat{H}_\tau(\cdot,D),C^\infty_c(\cR^d))$ with the $\tau$-symbol $H(q,p)$ as in \eqref{def:H}
  can be represented by a Hamiltonian Feynman path integral with respect to
the Feynman pseudomeasure $\Phi^1_x$:
\begin{equation}\label{FPI-1}
    T_t\varphi(x)
    =\intl_{E^{x,1}_t}e^{-\intl_0^t H(q(s),p(s)) ds}\varphi(q(0))\,\Phi^1_x(dq\,dp).
\end{equation}
\end{theorem}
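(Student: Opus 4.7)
The plan is to mirror almost verbatim the proof of Theorem~\ref{Theo:FPI-tau}, but applied to the fully general L\'{e}vy--Khintchine type symbol $H=h+r$ and specialized to $\tau=1$. All the analytic work has already been packaged into the previous theorem (the Hamiltonian Feynman formula \eqref{HFF-1} for $\tau=1$); what remains is a purely combinatorial / measure-theoretic identification of the $n$-fold iterated integrals in \eqref{HFF-1} with the pre-limit expressions of the Hamiltonian Feynman path integral from Definition~\ref{DefHFPI}.

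First I would recall that, by the preceding theorem under Assumptions~\ref{assum_smoothnessOfCoef}(i),(iii),(iv) and Assumption~\ref{assum_generate}, the semigroup admits the Hamiltonian Feynman representation
\begin{equation*}
T^1_t\varphi(x)=\lim_{n\to\infty}(2\pi)^{-nd}\intl_{(\cR^d)^{2n}}\exp\Bigl[i\suml_{k=1}^n p_k\cdot(q_{k+1}-q_k)\Bigr]\exp\Bigl[-\tfrac{t}{n}\suml_{k=1}^n H(q_{k+1},p_k)\Bigr]\varphi(q_1)\,dq_1\,dp_1\cdots dq_n\,dp_n,
\end{equation*}
with $q_{n+1}:=x$. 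Next I would apply the bijection $J^1_n\colon E^1_n\to(\cR^d\times\cR^d)^n$ defined before Definition~\ref{DefHFPI}. Under $(J^1_n)^{-1}$, the symplectic form discretizes as $\suml_{k=1}^n p_k\cdot(q_{k+1}-q_k)=\intl_0^t p(s)\dot q(s)\,ds$, so the oscillatory factor $\exp[i\sum p_k\cdot(q_{k+1}-q_k)]$ is precisely the (regularized) generalized density of $\Phi^1_x$ restricted to the finite-dimensional approximation $E^1_n$. Thus the $n$-fold integral in \eqref{HFF-1} is, modulo the normalization $(2\pi)^{-nd}$, the pre-limit expression in the Hamiltonian Feynman path integral of the function $(q,p)\mapsto\exp[-\intl_0^t H(q(s),p(s))\,ds]\,\varphi(q(0))$ evaluated on $E^1_n$, in the exact sense of Definition~\ref{DefHFPI}.

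To identify the pointwise limit of the integrand on a path $(q(\cdot),p(\cdot))\in E^{x,1}_t$, two convergences are needed. The first is $\varphi(q_1)=\varphi(q(t/n-0))\to\varphi(q(0))$, which follows from the defining condition $q(0)=\liml_{s\to+0}q(s)$ on $Q^{x,1}_t$ and continuity of $\varphi$. The second is the Riemann-type convergence
\begin{equation*}
\tfrac{t}{n}\suml_{k=1}^n H(q_{k+1},p_k)=\suml_{k=1}^n H\bigl(q(t_{k+1}-0),p(t_k)\bigr)(t_k-t_{k-1})\ \longrightarrow\ \intl_0^t H(q(s),p(s))\,ds,
\end{equation*}
which holds because each path in $E^{x,1}_t$ is piecewise constant with only finitely many jumps (so the sum is an honest left Riemann sum on each interval of constancy away from jumps, and the contribution of the jump points is asymptotically negligible), and because $H(q,p)=h(q,p)+r(p)$ is continuous on $\cR^d\times\cR^d$; continuity in $q$ comes from Assumption~\ref{assum_smoothnessOfCoef}(iii) for $A,b,c$, while continuity (indeed smoothness) in $p$ is Assumption~\ref{assum_smoothnessOfCoef}(iv) combined with the fact that the L\'evy symbol $r$ is continuous.

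Combining these identifications, Definition~\ref{DefHFPI} gives
\begin{equation*}
T^1_t\varphi(x)=\intl_{E^{x,1}_t}e^{-\intl_0^t H(q(s),p(s))\,ds}\varphi(q(0))\,\Phi^1_x(dq\,dp),
\end{equation*}
which is \eqref{FPI-1}. The main (really only) subtle point is the Riemann sum convergence above; but since this is a pointwise statement along a single path rather than a statement under an integral sign, no uniformity is required and continuity of $H$ together with the piecewise structure of the paths suffices. Everything else is bookkeeping: the bijection $J^1_n$ and the formula for the generalized density of $\Phi^1_x$ make the finite-dimensional approximations in \eqref{HFF-1} and in Definition~\ref{DefHFPI} agree term by term.
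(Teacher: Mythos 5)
Your proposal is correct and follows essentially the same route as the paper: the paper proves this theorem by simply observing that, by Definition~\ref{DefHFPI}, the pre-limit expressions of the Hamiltonian Feynman formula \eqref{HFF-1} coincide with those of the phase space Feynman path integral, exactly as in the proof of Theorem~\ref{Theo:FPI-tau} (bijection $J^1_n$, convergence $\varphi(q_1)\to\varphi(q(0))$, and the Riemann-sum convergence for piecewise constant paths with finitely many jumps). Your write-up in fact spells out these identifications in more detail than the paper does.
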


\begin{remark}
Note, that our definition of the space $E^{x,1}_t$ differs from the definition of the space $E^{x}_t$  in the paper \cite{BBSchS}.  Hence the corresponding sequential Feynman pseudomeasures used above and in \cite{BBSchS} are also different. This leads to different Feynman path integrals representing the considered evolution semigroup (cf. with Theorem 3.5 in \cite{BBSchS}).
\end{remark}

\begin{remark}
Lagrangian Feynman formula \eqref{formula:LFF-tau} (resp. \eqref{formula:LFF-1}) actually  provides a tool to compute Feynman path integral \eqref{formula:FPI-tau} (resp. \eqref{FPI-1}). The limits in both Lagrangian formulas coincide with  functional integrals over some probability measures.
\end{remark}

\begin{ack}
Financial support by the DFG through the project GR-1809/10-1, by  the Ministry of education and science of Russian Federation  through the project 14.B37.21.0370,   by the President of Russian Federation through the project MK-4255.2012.1 and by DAAD is gratefully acknowledged.
\end{ack}

\end{document}